\theoremstyle{plain}
 \newtheorem{thm}{Theorem}[section]
 \newtheorem{prop}[thm]{Proposition}
 \newtheorem{lem}[thm]{Lemma}
 \newtheorem{cor}[thm]{Corollary}
\theoremstyle{definition}
 \newtheorem{exm}[thm]{Example}
 \newtheorem{dfn}[thm]{Definition}
 \newtheorem{rem}[thm]{Remark}
 \newtheorem*{claim*}{Claim}
\theoremstyle{remark}
 \numberwithin{equation}{section}
\def\PGL{\mbox{\rm PGL}}
\def\Qp{\mathbb{Q}_p}
\def\Zp{\mathbb{Z}_p}
\def\Aut{\text{\rm Aut}}
\def\Ch{\text{\rm Ch}}
\def\proj{\text{\rm proj}}
\def\Fix{\text{\rm Fix}}
\def\id{\text{\rm id}}
\def\Sym{\text{\rm Sym}}
\def\T{{T}}
\def\Tq{{T}_{d}}
\def\TQ{{T}_{\textbf{a}}}
\def\con{\mathrm{con}}
\def\F{F}
\def\FF{F'}
\renewcommand{\le}{\leqslant}\renewcommand{\leq}{\leqslant}
\renewcommand{\ge}{\geqslant}\renewcommand{\geq}{\geqslant}
\renewcommand{\setminus}{\smallsetminus}
\title[Homomorphic Images of Groups Acting on Trees and Buildings]{Homomorphic Images of Locally Compact Groups Acting on Trees and Buildings}
\date{\today}
\subjclass[2010]{Primary: 20E08, 22D05; Secondary: 05C63, 22F50}
\keywords{Cartan decomposition, totally disconnected locally compact group, groups acting on trees and buildings, right-angled building, contraction group}
\author[Max Carter]{Max Carter} 
\address{School of Mathematical and Physical Sciences \\ 
University of Newcastle \\ 
Callaghan \\
Australia}
\email{max.carter.math@gmail.com}
\author[George Willis]{George Willis}
\address{School of Mathematical and Physical Sciences \\ 
University of Newcastle \\ 
Callaghan \\
Australia}
\email{george.willis@newcastle.edu.au}
\thanks{\textit{Acknowledgements:} The second author acknowledges support from the Australian Research Council grant FL170100032} 
\begin{document}

\begin{abstract}
We study analogues of Cartan decompositions of Lie groups for totally disconnected locally compact groups. It is shown using these decompositions that a large class of totally disconnected locally compact groups acting on trees and buildings have the property that every continuous homomorphic image of the group is closed.
\end{abstract}

\maketitle

\section{Introduction}

Every locally compact group $G$ admits a short exact sequence
\begin{displaymath} \{1\} \rightarrow G_0 \rightarrow G \rightarrow G/G_0 \rightarrow \{1\} \end{displaymath}
where $G_0$ is the connected component of the identity in $G$. The set $G_0$ is a normal connected locally compact subgroup of $G$ and $G/{G_0}$ is a totally disconnected locally compact group. Consequently, the study of the structure of locally compact groups essentially splits into the study of the connected locally compact groups and the totally disconnected locally compact groups.

Connected locally compact groups are already fairly well understood: in work by Gleason, Montgomery and Zippin \cite{Gle51, Gle52, MZ52} to solve Hilbert's fifth problem, connected locally compact groups have been identified as inverse limits of connected Lie groups. Totally disconnected locally compact groups (t.d.l.c.\ groups from now on) on the other hand are not as well understood and are resisting a general structure theory. Until the mid 90's, the only known general result for t.d.l.c.~groups was a theorem by van Dantzig from 1931 \cite{VD31, VD36} which asserts that every t.d.l.c.\ group admits a base of neighbourhoods of the identity made up of compact open subgroups. 

In recent years significant progress has been made in the structure theory of t.d.l.c.~groups, though, a general structure theory is still far from complete. Automorphism groups of locally finite connected graphs have played a fundamental role in the study of t.d.l.c.~groups to date, partially due to the Cayley-Abels graph construction \cite{Abe73, KM08}. The present article focuses on problems concerning these groups. 

We extend work from the paper \cite{CW20} to a more general class of t.d.l.c.~groups acting on trees and buildings. In \textit{loc.\ cit.}, the authors transfer ideas from the study of Lie groups to the study of t.d.l.c.~groups acting on trees: analogues of Cartan decompositions from Lie theory are studied for automorphism groups of label-regular trees. Using the given Cartan decomposition, it is shown that the simple subgroup of the automorphism group of a label-regular tree generated by edge fixators has the property that every continuous homomorphism from the group has closed range \cite[Theorem 4.2]{CW20}. This property is shared with simple Lie groups (\cite{Omo66}, also see \cite{BG14} for historical remarks).

In the present article we abstract an idea seen in the proof of \cite[Theorem 4.2]{CW20} and call it the contraction group property, see Definition \ref{dfn:contraction}. In Theorem \ref{thm:closedrangethm}, it is shown that any topologically simple group satisfying the contraction group property has the closed range property, that is, every continuous homomorphism from the group has closed range. This generalises the result \cite[Theorem 4.2]{CW20}. We use Theorem \ref{thm:closedrangethm} to show that members of a much larger class of t.d.l.c.\ groups acting on trees have the closed range property, see Theorem \ref{thm:mainthm}. We also study the contraction group and closed range properties more generally and prove a few general results. The article concludes by proving an analogue of Theorem \ref{thm:mainthm} for automorphism groups of semi-regular right-angled buildings, see Theorem \ref{thm:mainthm2}.


\section{Preliminaries}\label{sec:prelim}

Here we recall some notation and terminology that will be used throughout the article. Let $V\Gamma$ be a set and $E\Gamma \subseteq \{ \{ u,v \} : u,v \in V\Gamma \}$. The pair $\Gamma = (V\Gamma, E\Gamma)$ is called a \textit{graph}. The elements of $V\Gamma$ are called \textit{vertices} and elements of $E\Gamma$ \textit{edges}. For $v \in V\Gamma$, the cardinality of the set $E(v) := \{ e \in E\Gamma : v \in e \}$ is called the \textit{degree} of $v$, denoted $deg(v)$, and we call $\Gamma$ \textit{locally-finite} if $deg(v)$ is finite for all $v \in V\Gamma$. 

Let $I$ be a subinterval of $\mathbb{Z}$ with its usual order, and let $(v_i)_{i \in I}$ a sequence in $V\Gamma$. The sequence is called a \textit{path} if $v_i \ne v_{i+1}$ and $\{v_i, v_{i+1}\} \in E\Gamma$ for all $i \in I$. If $(v_i)_{i \in I}$ is a path and $I$ has finite cardinality, then we define its \textit{length} to be $\lvert I \rvert -1$ ({\it i.e.\/} the number of edges in the path), otherwise, we say the path is a \textit{ray} if $I = \mathbb{N}$ or \textit{bi-infinite} if $I = \mathbb{Z}$. 

If $(v_i)_{i \in I}$ is a path and $I = \{0, 1, \dots, n\}$, we call the vertices $v_0$ and $v_n$ the \textit{endpoints} of the path, and say that the path is a \textit{cycle} if $v_0 = v_n$. Given two vertices $u,v \in V\Gamma$, the \textit{distance} between $u$ and $v$, denoted $d(u,v)$, is the length of a shortest path with endpoints $u$ and $v$, provided that such a path exists. Define the following two sets: 
$$
B(v,n) := \{ u \in V\T \mid d(v,u) \le n \}\text{ and }S(v,n) := \{ u \in V\T \mid d(u,v)=n \}.
$$

For $v \in V\Gamma$, an edge of the form $\{v,v\}$ is called a \textit{loop}. $\Gamma$ is called \textit{connected} if for any two vertices $v,u \in V\Gamma$, there is a path in $\Gamma$ with endpoints $v$ and $u$. A connected graph with no loops is called \textit{simple}. All graphs will be assumed to be simple. A simple graph with no cycles is called a \textit{tree}. The \textit{regular tree} of degree $d$, also called the \textit{$d$-regular} tree and denoted by $\Tq$, is the infinite tree in which every vertex has degree $d$. \textit{Label-regular trees}, denoted by $\TQ$, will be mentioned in the article; the reader may consult \cite[Section 2]{CW20} for the definition of these if required.

Fix an infinite tree $\T = (V\T, E\T)$. Two rays in $\T$ are said to be equivalent if their intersection is also a ray. It is easily checked that this is an equivalence relation on the set of all rays in $\T$. The set of equivalence classes of rays under this equivalence relation is called the \textit{boundary} of $\T$ and denoted by $\partial \T$. The elements of $\partial \T$ are the \textit{ends} of $\T$.

Suppose that the group $G$ acts on a set $X$ and let $Y \subseteq X$. Then the \textit{stabiliser} of $Y$ under the action of $G$ is $G_{Y}:=\{g \in G \mid g(Y)=Y \}$. If $Y = \{ y \}$ then we will write $G_y$ instead of $G_{\{y\}}$. Similarly, $\text{Fix}_{G}(Y) := \{ g \in G \mid g(y) = y \; \forall y \in Y \}$ and is called the \textit{fixator} of $Y$ in $G$. For $x \in X$, $Gx := \{ g(x) \mid g \in G \}$ denotes the \textit{orbit} of $x$ under $G$. 

$\text{Aut}(\Gamma)$ will denote the group of all graph automorphisms of the graph $\Gamma$. Throughout this paper, it will be assumed that $\text{Aut}(\Gamma)$ has the permutation topology: the topology whose base neighbourhoods are sets of the form 
$$
\mathcal{U}(g_0,\mathcal{F}) := \{ g \in \text{Aut}(\Gamma) : g(x) = g_0(x) \; \text{for all} \; x \in \mathcal{F} \},
$$ 
with $g_0$ ranging over all elements of the group $\text{Aut}(\Gamma)$ and $\mathcal{F}$ ranging over all finite subsets of $V\Gamma$. It is assumed that any subgroup of $\text{Aut}(\Gamma)$ has the induced topology.

Now let $\T$ be a tree and $G$ a group of automorphisms of $\T$. For $e = \{v,w\} \in E\T$ and $k \in \mathbb{N}$ define $F_{k,e} := \Fix_{G}(B(v,k) \cap B(w,k)) = \Fix_{G}(B(v,k-1) \cup B(w,k-1))$. We define the following two groups:
\begin{displaymath} G^{(k)} := \{ g \in \Aut(\T) \mid \forall v \in V\T \; \exists g_o \in G \; \text{such that} \; g|_{B(v,k)} = g_0|_{B(v,k)} \} \end{displaymath}
called the $k$-closure of $G$ and
\begin{displaymath} G^{+_k} := \langle F_{k,e} \mid e \in E\T \rangle. \end{displaymath}

We will also denote by $\T_{(v,w)}$ the half-tree containing the vertex $w$ formed by removing the edge $\{v,w\}$ from $\T$.

The final section of the article discusses buildings. We will for the most part follow the notation and terminology set out in \cite{AB08}. Fix a Coxeter system $(W,S)$ and let $\ell$ be the length function on words in $W$ with respect to the generating set $S$. The Coxeter system is called \textit{spherical} if $W$ is finite and \textit{right-angled} if the off-diagonal entries of the associated Coxeter matrix \cite[Definition 1.95]{AB08} consist of only 2's and $\infty$'s.  

Following \cite[Definition 5.1]{AB08}, a \textit{building} $\Delta$ of \textit{type} $(W,S)$ is a pair $(\Ch(\Delta), \delta)$, where $\Ch(\Delta)$ is a non-empty set whose elements are called the \textit{chambers} of $\Delta$, and $\delta: \Ch(\Delta) \times \Ch(\Delta) \rightarrow W$ a map called the \textit{Weyl distance} function which satisfies the following properties:

\begin{enumerate}[(i)]
   \item $\delta(C,D) = 1$ if and only if $C = D$. \
   \item If $\delta(C,D) = w$ and $C' \in \Ch(\Delta)$ satisfies $\delta(C',C) = s \in S$, then either $\delta(C',D) = w$ or $\delta(C',D) = sw$. If, in addition, $\ell(sw) = \ell(w) + 1$, then $\delta(C',D) = sw$.\
   \item If $\delta(C,D) = w$, then for any $s \in S$ there is a chamber $C' \in \Ch(\Delta)$ such that $\delta(C', C) = s$ and $\delta(C', D) = sw$.\
\end{enumerate} 

Note that such a building can be viewed as an edge-labelled graph: the vertices are the elements of $\Ch(\Delta)$ and two chambers $C, D \in \Ch(\Delta)$ are connected by an edge of label $s \in S$ if $\delta(C,D) = s$ (c.f.\ \cite[Chapter 1]{Wei04}). The automorphism group $\Aut(\Delta)$ can then be given the permutation topology as described above.

Now let $J \subseteq S$. Two chambers $C,D \in \Ch(\Delta)$ are said to be $J$-equivalent, which we denote by $C \sim_{J} D$, if $\delta(C,D) \in W_{J}$ where $W_{J} := \langle J \rangle \le W$. This is an equivalence relation on the set of chambers of $\Delta$. The equivalence classes under this equivalence relation are called \textit{$J$-residues}, and the $J$-residue containing the chamber $C \in \Ch(\Delta)$ will be denoted by $\mathcal{R}_{J}(C)$. An arbitrary subset $\mathcal{R} \subseteq \Ch(\Delta)$ is called a \textit{residue} if it is a $J$-residue for some $J \subseteq S$. 

If $J = \{s\}$ for some $s \in S$, we say that two chambers $C$ and $D$ are \textit{$s$-equivalent} and write $C \sim_{s} D$. Moreover, if $\delta(C,D) = s$ then we say that $C$ and $D$ are \textit{$s$-adjacent}, and two chambers are said to be \textit{adjacent} if they are $s$-adjacent for some $s \in S$. The equivalence classes in $\Ch(\Delta)$ under the equivalence relation $\sim_{s}$ ($s \in S$) are called \textit{$s$-panels}. The term \textit{panel} is used to refer to an $s$-panel for some $s \in S$. The unique $s$-panel containing a chamber $C \in \Ch(\Delta)$ will be denoted by $\mathcal{P}_{s}(C)$. A building such that every panel has cardinality two is called \textit{thin}, and a building where every panel has cardinality strictly greater than two is called \textit{thick}. A thin subbuilding of the building $\Delta$ is called an \textit{apartment} of $\Delta$ and is necessarily isometric to the Cayley graph of $(W,S)$.

A \textit{gallery} of length $n$ in $\Delta$ is a sequence of chambers $\Gamma: C_0, \dots, C_n$ such that $C_{i-1}$ is adjacent to $C_i$ for each $i$. If there is no gallery of shorter length between $C_0$ and $C_n$, then we define the distance $d(C_0,C_n)$ between $C_0$ and $C_n$ to be $n$. One can show that $d(C,D) = \ell(\delta(C,D))$ for any $C,D \in \Ch(\Delta)$. The gallery $\Gamma$ is called \textit{minimal} if $d(C_0,C_n) =n$. 


Given a residue $\mathcal{R}$ and a chamber $D \in \Ch(\Delta)$, define $d(\mathcal{R}, D) := \min\{ d(C,D) \mid C \in \Ch(\mathcal{R}) \}$. It can be shown that there is a unique chamber $C_1 \in \Ch(\mathcal{R})$ such that $d(C_1,D) = d(\mathcal{R}, D)$ \cite[Proposition 5.34]{AB08}. The chamber $C_1$ is then called the \textit{projection} of $D$ onto $\mathcal{R}$ and is denoted by $\proj_{\mathcal{R}}(D)$. For two residues $\mathcal{R}_1$ and $\mathcal{R}_2$, we define $\proj_{\mathcal{R}_1}(\mathcal{R}_2) := \{ \proj_{\mathcal{R}_1}(C) \mid C \in \Ch(\mathcal{R}_2) \}$.

A building is called \textit{right-angled} if the underlying Coxeter system $(W,S)$ is right-angled and \textit{semi-regular} if for every $s \in S$, every $s$-panel has the same cardinality. By \cite[Proposition 1.2]{HP03}, for every sequence $(q_s)_{s \in S}$ of cardinal numbers and right-angled Coxeter system $(W,S)$, there exists a unique semi-regular right-angled building $\Delta$ of type $(W,S)$ such that every $s$-panel has cardinality $q_s$ for all $s \in S$. The reader can consult \cite{DMSS18, Cap14} for more details on semi-regular right-angled buildings. We will refer to these articles whenever necessary.


\section{The Contraction Group and Closed Range Properties}

\subsection{Cartan-like Decompositions}

In this section we extend ideas from the paper \cite{CW20}. The following notion will be used throughout.

\begin{dfn}[Cartan-like Decomposition]\label{dfn:Cartan-like}
Let $G$ be a topological group. A \textit{Cartan-like decomposition} of $G$ is a double coset decomposition $G=KAK$ where $K \le G$ is a compact open subgroup and $A \subseteq G$ is a set of coset representatives for the decomposition.
\end{dfn}

\begin{rem}
Note that we do not assume any structure on the set $A$; it just has to be a subset of the group. This is distinct from a Cartan decomposition of a Lie group, in which $A$ is a subgroup, hence the name ``Cartan-like".
\end{rem}

\begin{dfn}[Contraction Group] 
Let $G$ be a topological group and $(g_i)_{i \in I} \subseteq G$ a sequence. The \textit{contraction group} of the sequence, denoted $\con((g_i)_{i \in I})$, is defined by $\con((g_i)_{i \in I}) := \{ x \in G \mid g_{i}xg_{i}^{-1} \rightarrow \id_G \}$. 
\end{dfn}

Let $\TQ$ be a label-regular tree (see \cite[Section 2]{CW20} for definition). It was shown in the proof of \cite[Theorem 4.2.]{CW20}, that if we take any sequence of coset representatives in the Cartan-like decomposition of $\Aut(\TQ)$ with respect to a maximal compact open subgroup, then either the sequence is bounded or has a subsequence with non-trivial contraction group. This led to the fact that any continuous homomorphism from the simple subgroup $\Aut^{+_1}(\TQ) \le \Aut(\TQ)$ has closed range. From now on, we will use the following terminology, which reflects the ideas discussed in this paragraph.

\begin{dfn}[Contraction Group Property] \label{dfn:contraction}
Let $G$ be a topological group and $G = KAK$ a Cartan-like decomposition of $G$. We say that this decomposition has the \textit{contraction group property} if every sequence of coset representatives ({\it i.e.\/} elements of $A$) is either bounded or has a subsequence with non-trivial contraction group. We say that a group $G$ has the contraction group property if there exists a compact open subgroup $K \le G$ and a Cartan-like decomposition of $G$ with respect to $K$ satisfying the contraction group property.
\end{dfn}

And we define the closed range property as follows.

\begin{dfn}
[Closed Range Property] A topological group $G$ has the \textit{closed range property} if every continuous homomorphism $\varphi: G \rightarrow H$, where $H$ is an arbitrary topological group, has closed range {\it i.e.\/} $\varphi(G)$ is closed in $H$.
\end{dfn}


\subsection{The Contraction Group Property}  

In this section we will prove some general results concerning the contraction group property. First we show that the contraction group property is independent of the choice of compact open subgroup.

\begin{prop}\label{prop:cosdependence}
Let $G$ be a topological group. Suppose that there exists a compact open subgroup $K \le G$ and a Cartan-like decomposition $G=KAK$ with the contraction group property. Then, for any compact open subgroup $L \le G$, there exists a Cartan-like decomposition $G=LUL$ with the contraction group property.
\end{prop}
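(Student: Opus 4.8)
The plan is to use the fact that $K$ and $L$ are both compact open subgroups of $G$, so they are commensurable: $K \cap L$ has finite index in both $K$ and $L$. Starting from the given decomposition $G = KAK$ with the contraction group property, I want to manufacture a new set of double coset representatives $U$ for the $L$-$L$ double cosets and show that the contraction group property is inherited. The key observation is that "bounded'' and "trivial/non-trivial contraction group'' are intrinsic notions — the first because a subset of $G$ is relatively compact independent of which compact open subgroup we test against, the second because the contraction group $\con((g_i))$ depends only on the sequence $(g_i)$, not on any choice of $K$ or $L$. So the entire burden is combinatorial: relating an arbitrary sequence in $U$ to a sequence built from $A$.

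\medskip

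\textbf{Key steps.} First, write $G = \bigsqcup_{a \in A} KaK$. Since $L$ is open and $K$ is compact, each double coset $KaK$ is a union of finitely many $L$-$L$ double cosets; choosing representatives for these gives a set $U \subseteq G$ with $G = LUL$, and each $u \in U$ can be written $u = k_u\, a(u)\, k_u'$ for some $a(u) \in A$ and $k_u, k_u' \in K$. Second, given an arbitrary sequence $(u_i) \subseteq U$, consider the associated sequence $(a(u_i)) \subseteq A$. If $(a(u_i))$ has a bounded subsequence, then along that subsequence $u_i \in K\, a(u_i)\, K$ lies in a relatively compact set (product of $K$, a bounded set, and $K$), so $(u_i)$ has a bounded subsequence, and we are in the first alternative of Definition~\ref{dfn:contraction}. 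Third, if $(a(u_i))$ has no bounded subsequence, apply the contraction group property of $G = KAK$: after passing to a subsequence, $\con((a(u_i)))$ is non-trivial, say it contains $x \neq \id_G$. The remaining task is to convert this into a non-trivial contraction group for a subsequence of $(u_i)$ itself. Write $u_i = k_i a(u_i) k_i'$ with $k_i, k_i' \in K$; since $K$ is compact, pass to a further subsequence so that $k_i \to k \in K$ and $k_i' \to k' \in K$. Then for a suitable conjugate — one expects $k'^{-1} x k'$, or perhaps $k^{-1}xk$ depending on how the conjugation unwinds — one checks that $u_i (k'^{-1} x k') u_i^{-1} \to \id_G$ using $u_i x' u_i^{-1} = k_i\,\bigl(a(u_i)\,(k_i' x' k_i'^{-1})\,a(u_i)^{-1}\bigr)\,k_i^{-1}$ and continuity of multiplication, provided $k_i' x' k_i'^{-1}$ converges to an element of $\con((a(u_i)))$.

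\medskip

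\textbf{Main obstacle.} The delicate point is exactly this last conjugation step. We know $a(u_i) x a(u_i)^{-1} \to \id_G$ for the fixed element $x$, but in the expression for $u_i x' u_i^{-1}$ the element being conjugated by $a(u_i)$ is $k_i' x' k_i'^{-1}$, which \emph{varies with $i$}; it only converges to $x$ in the limit. So I cannot directly invoke $x \in \con((a(u_i)))$. The fix is to exploit that $K$ is not merely compact but compact \emph{open}: the stabiliser structure means conjugation by elements of a small enough open subgroup of $K$ moves $x$ only within a fixed neighbourhood, and more usefully, $\con((a(u_i)))$ is stable under the relevant finite-index considerations. A cleaner route, which I expect to be the one to pursue: since $k_i' \to k'$, for large $i$ we have $k_i' \in k' V$ for $V$ a compact open subgroup of $K$; absorbing the $V$-part, one reduces to showing that if $a(u_i) x a(u_i)^{-1} \to \id_G$ and $v_i \to \id_G$ in $K$ then $a(u_i)(v_i x v_i^{-1})a(u_i)^{-1} \to \id_G$ — but this is false in general, so instead one should choose the representatives $U$ more carefully from the outset (e.g.\ using the finitely many $(K\cap L)$-cosets inside $K$ so that the "error'' elements $k_i, k_i'$ range over a finite set and hence are eventually constant along a subsequence). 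With $k_i \equiv k$ and $k_i' \equiv k'$ constant, the computation becomes routine: $u_i (k'^{-1} x k') u_i^{-1} = k\,a(u_i)\, x\, a(u_i)^{-1}\, k^{-1} \to k \id_G k^{-1} = \id_G$, and $k'^{-1} x k' \neq \id_G$, completing the argument. Arranging that the $k_i, k_i'$ can be taken from a finite set is therefore the real content, and it follows from commensurability of $K$ and $L$ together with van Dantzig's theorem.
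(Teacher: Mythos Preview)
Your approach is essentially the paper's: the paper also reduces to the situation where the ``error'' factors range over a finite set and then passes to a subsequence along which they are constant, after which the conjugation computation is exactly the one you wrote. The paper organises this a bit more cleanly by a two-step reduction --- first replacing $K$ by any open subgroup $K'\le K$ (so $A' = \bigcup_{i,j} g_i^{-1} A g_j$ with the $g_i$ finitely many coset representatives of $K'$ in $K$, which builds the finite-error structure in from the start), then replacing $K$ by any compact open $K'\ge K$ (trivial since one can take $A'\subseteq A$), and finally going from $K$ to an arbitrary $L$ via $K\cap L$.

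One slip to fix: your case split on whether $(a(u_i))$ has a bounded subsequence does not match the dichotomy in the definition. Having a bounded subsequence of $(u_i)$ is \emph{not} the first alternative --- the first alternative is that the entire sequence is bounded. The correct split is on $(u_i)$ itself: if $(u_i)$ is bounded you are done; if not, then $(a(u_i))$ is unbounded as well (since $u_i\in K\,a(u_i)\,K$ with $K$ compact), and now the contraction group property for $A$ yields a subsequence of $(a(u_i))$ with non-trivial contraction group, to which your (correct) conjugation argument with eventually constant $k_i,k_i'$ applies.
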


\begin{proof}
Let $G$ be a topological group and let $K \le G$ and $G = KAK$ satisfy the contraction group property. 

Consider an open subgroup $K' \le K$. By compactness of $K$, there exist elements $g_1, \dots, g_n \in G$ such that $K = \bigsqcup_{i=1}^{n} g_i K'$. Then $G$ also decomposes as $G = K'A'K'$ where $A' := \bigcup_{i,j} g_{i}^{-1} A g_{j}$. Now suppose that $( h_i)_{i=1}^{\infty} \subseteq A'$ is a sequence of coset representatives. We need to show that the sequence $(h_i)_{i=1}^{\infty}$ is either bounded or has a subsequence with non-trivial contraction group.

If the sequence is bounded, we are done, so we may suppose that it is unbounded and show that it has a subsequence with non-trivial contraction group. Since there are only finitely many $g_i$, by passing to a subsequence of the $h_i$, we may suppose that for each $i$, $h_i = g_{l}^{-1} a_i g_{m}$ with $a_i \in A$ and fixed $l,m \in \{ 1, \dots, n \}$. Since $(h_i)_{i=1}^{\infty}$ is unbounded, so must be the sequence $(a_i)_{i=1}^{\infty}$, and hence by assumption, there exists a subsequence $(a_{i_j})_{j=1}^{\infty} \subseteq (a_i)_{i=1}^{\infty}$ and a non-trivial $x \in \con((a_{i_j})_{j=1}^{\infty})$. Then, $\tilde{x} = g_{l}^{-1} x g_{m}$ is a non-trivial element in the contraction group of the subsequence $(h_{i_j})_{j=1}^{\infty}$. Thus the decomposition $G = K'A'K'$ has the contraction group property.

On the other hand, suppose that $K'$ is a compact open subgroup of $G$ containing $K$. Since $K \le K'$, there exists a Cartan-like decomposition $G = K' A' K'$ with $A' \subseteq A$. It then follows that the decomposition $G = K' A' K'$ has the contraction group property since the decomposition $G = KAK$ does.

We have thus shown that if a compact open subgroup $K'$ of $G$ contains $K$ or is contained in $K$, then there exists a Cartan-like decomposition of $G$ with respect to $K'$ satisfying the contraction group property. Now suppose that $L$ is an arbitrary compact open subgroup not necessarily contained in or containing $K$. Then $L \cap K$ is a compact open subgroup contained in $K$, hence there exists a Cartan-like decomposition of $G$ with respect to $L \cap K$ satisfying the contraction group property. Since $L$ contains $L \cap K$, it follows that there is a Cartan-like decomposition of $G$ with respect to $L$ satisfying the contraction group property.
\end{proof}

Ideally, the contraction group property would also not depend on the choice of coset representatives in a Cartan-like decomposition, however, unfortunately, this is not always the case as illustrated in the following example.

\begin{exm}
Let $G = \PGL(\Qp)$ and $K = \PGL(\Zp)$. Note that $K$ is a compact open subgroup of $G$ and $G = \bigsqcup_{n \in \mathbb{N}} K g^n K$ where $g = (\begin{smallmatrix} p & 0 \\ 0 & 1 \end{smallmatrix})$. It can be checked that $(\begin{smallmatrix} 1 & 1 \\ 0 & 1 \end{smallmatrix}) \in \con((g^n)_{n=1}^{\infty})$. 

Since $k_n = (\begin{smallmatrix} 1 & 0 \\ p^{\lceil n/3 \rceil} & 1 \end{smallmatrix})$ belongs to $K$ for each $n \in \mathbb{N}$, the elements 
$$h_n := g^n k_n = (\begin{smallmatrix} p^n & 0 \\ p^{\lceil n/3 \rceil} & 1 \end{smallmatrix})$$
\noindent also form a set of coset representatives for a Cartan-like decomposition of $G$ with respect to $K$. We claim that the contraction group of every subsequence of the sequence $(h_n)_{n=1}^{\infty}$ is trivial.

Suppose that $h = (\begin{smallmatrix} a & b \\ c & d \end{smallmatrix}) \in G$ and  $h \in \con((h_n)_{n=1}^{\infty})$. Then, it may be checked that
$$h_n h h_{n}^{-1} = \begin{pmatrix} a - bp^{\lceil n/3 \rceil} & bp^{n} \\ (a - d)p^{\lceil n/3 \rceil - n} + cp^{-n} - bp^{2 \lceil n/3 \rceil -n} & bp^{\lceil n/3 \rceil} + d \end{pmatrix}.$$
\noindent Since it is assumed that $h_nhh_n^{-1} \rightarrow (\begin{smallmatrix} 1 & 0 \\ 0 & 1 \end{smallmatrix})$, we must have $b = c = 0$ and $a=d$, otherwise, the norm of the bottom left entry of the matrix will diverge to $\infty$. It then follows that $h$ is the identity in $G$ and $\con((h_n)_{n=1}^{\infty})$ is trivial. The same argument applies to every subsequence of $(h_n)_{n=1}^{\infty}$. 
\end{exm}

This demonstrates that if one Cartan-like decomposition of a group has the contraction group property, then another Cartan-like decomposition may not, even if we keep the same compact open subgroup. For some groups, however, the contraction group property is independent of the choice of coset representatives.

\begin{prop}\label{prop:prop1}
Let $\T$ be an infinite locally finite tree without leaves and $G \le \Aut(\T)$. Suppose that the fixator of any half-tree of $\T$ in $G$ is non-trivial. Then every unbounded sequence $(g_i)_{i=1}^{\infty}\subset G$ has a subsequence with non-trivial contraction group.
\end{prop}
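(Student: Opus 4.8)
The plan is to use the action on the tree together with the hypothesis that half-tree fixators are non-trivial. First I would fix a base vertex $v_0 \in V\T$ and recall that, since $\T$ is locally finite, the vertex stabiliser $G_{v_0}$ is compact; hence a sequence $(g_i)$ is unbounded in $G$ precisely when $d(v_0, g_i(v_0)) \to \infty$ (along a subsequence, at least). So after passing to a subsequence we may assume $d(v_0, g_i(v_0))$ is strictly increasing. The geodesic from $v_0$ to $g_i(v_0)$ starts with some edge $\{v_0, w\}$ at $v_0$; since $v_0$ has finitely many neighbours, by passing to a further subsequence I may assume this first edge $\{v_0, w\}$ is the same for all $i$. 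Similarly, looking from the other end, the last edge of the geodesic at $g_i(v_0)$ is $g_i$ applied to some edge at $v_0$; again by finiteness of the neighbour set, after a further subsequence I may assume that $g_i^{-1}$ sends this final edge to a fixed edge $\{v_0, w'\}$ at $v_0$, i.e. the geodesic from $g_i(v_0)$ back to $v_0$ begins (at the $g_i(v_0)$ end) along $g_i$ applied to the fixed edge $\{v_0, w'\}$.

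The key point is then the following: consider the half-tree $H := \T_{(w, v_0)}$, the component of $\T \setminus \{v_0, w\}$ containing $v_0$ — wait, I want the half-tree \emph{not} containing $v_0$, so let $H := \T_{(v_0, w)}$, the half-tree hanging off the edge $\{v_0,w\}$ on the far side from $v_0$. By hypothesis $\Fix_G(H)$ contains a non-trivial element $x$. I claim $x \in \con((g_i))$ — or more precisely that a conjugate of such an element lies in the contraction group of the chosen subsequence. The mechanism is the standard ``push toward an end'' argument: because the geodesic $[v_0, g_i(v_0)]$ leaves $v_0$ through the edge $\{v_0,w\}$ and has length tending to infinity, the half-tree $g_i^{-1}(H)$ is nested — it shrinks toward the end determined by $w'$ (this is exactly why I normalised the \emph{final} edge in the previous paragraph). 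Concretely, one checks that for any finite set $\mathcal F \subset V\T$, for $i$ large enough $g_i^{-1}(\T \setminus H)$ contains $\mathcal F$ — equivalently $g_i(\mathcal F) \subseteq \T \setminus H$ is false; I need the reverse containment so that $g_i x g_i^{-1}$ fixes $\mathcal F$. So the element I actually want is one fixing the half-tree $\T_{(v_0, w')}$ pointwise, call it $y$, and then $g_i y g_i^{-1}$ fixes $g_i(\T_{(v_0,w')})$ pointwise, and $g_i(\T_{(v_0,w')})$ exhausts $\T$ as $i \to \infty$ because $g_i(v_0)$ recedes along that half-tree's defining direction.

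Let me state the heart of it cleanly: after the subsequence normalisations, $g_i(v_0)$ lies on a ray based at $w'$ inside $\T_{(v_0,w')}$, sort of — more carefully, the geodesics $[g_i(v_0), v_0]$ all enter $v_0$ through $w'$, so $v_0 \notin g_i(\T_{(v_0,w')})$ and $d(v_0, g_i(\T_{(v_0,w')})) = d(v_0, g_i(v_0)) \to \infty$. Since the half-trees $g_i(\T_{(v_0,w')})$ are at unbounded distance from $v_0$ and the tree is locally finite, every finite subset of $V\T$ lies outside $g_i(\T_{(v_0,w')})$... no — lies in the \emph{complement}, hence is fixed pointwise by $g_i y g_i^{-1}$ where $y \in \Fix_G(\T_{(v_0,w')}) \setminus \{\id\}$. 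Therefore $g_i y g_i^{-1} \to \id_G$ in the permutation topology, i.e. $y \in \con((g_i))$ for the subsequence, and $y$ is non-trivial. The main obstacle is getting the nesting/normalisation of \emph{both} endpoints of the geodesics correct simultaneously — one must pigeonhole twice (first edge at $v_0$, last edge at $g_i(v_0)$) and then argue that the relevant half-trees genuinely recede rather than merely move; this uses that $d(v_0, g_i(v_0)) \to \infty$ together with the fact that the $g_i$-image half-tree is disjoint from a fixed ball around $v_0$ of radius tending to infinity. Local finiteness is essential in two places: to make $G_{v_0}$ compact (so ``unbounded'' means ``$d(v_0,g_i(v_0)) \to \infty$'') and to make the pigeonhole steps legitimate (finitely many neighbours), and the no-leaves hypothesis guarantees the half-trees are infinite so the whole setup is non-degenerate.
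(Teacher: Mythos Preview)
Your overall strategy is sound, and in fact you have stumbled on a route that is arguably cleaner than the paper's. The paper pigeonholes only on the \emph{first} edge of the geodesic $[v, g_i(v)]$ and then splits into two cases according to whether infinitely many $g_i$ are translations with $v$ on their axis, choosing a different half-tree fixator in each case. Your idea of pigeonholing on the \emph{last} edge of $[v_0, g_i(v_0)]$ (equivalently, the first edge of $[v_0, g_i^{-1}(v_0)]$) handles both cases at once. Incidentally, once you settle on $w'$, the first pigeonhole on $w$ is never used and can be dropped.

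However, your final ``clean'' paragraph reverses the direction of the inclusion and thereby contradicts your own earlier (correct) assertion that $g_i(\T_{(v_0,w')})$ exhausts $\T$. With your normalisation, the penultimate vertex of $[v_0, g_i(v_0)]$ is $g_i(w')$, so $g_i(\T_{(v_0,w')}) = \T_{(g_i(v_0),\, g_i(w'))}$ is the half-tree on the $g_i(w')$ side of that edge, which \emph{contains} $v_0$ (the geodesic from $v_0$ to $g_i(v_0)$ passes through $g_i(w')$). More precisely, any vertex $u \notin g_i(\T_{(v_0,w')})$ lies in the half-tree $\T_{(g_i(w'),\,g_i(v_0))}$, so the geodesic from $u$ to $v_0$ passes through $g_i(v_0)$ and hence $d(u,v_0) \ge d(g_i(v_0),v_0)$; thus $B(v_0, d(v_0,g_i(v_0))-1) \subseteq g_i(\T_{(v_0,w')})$. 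Therefore, for $y \in \Fix_G(\T_{(v_0,w')}) \setminus \{\id\}$, the conjugate $g_i y g_i^{-1}$ fixes $g_i(\T_{(v_0,w')}) \supseteq B(v_0, d(v_0,g_i(v_0))-1)$ pointwise, and so $g_i y g_i^{-1} \to \id$. Your claims that $v_0 \notin g_i(\T_{(v_0,w')})$, that $d(v_0, g_i(\T_{(v_0,w')})) \to \infty$, and that points in the \emph{complement} of $g_i(\T_{(v_0,w')})$ are fixed by $g_i y g_i^{-1}$, are all backwards; note also that in that paragraph you silently reinterpret $w'$ as the neighbour of $v_0$ on $[v_0,g_i(v_0)]$, which was $w$, not $w'$. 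Once these are straightened out your argument goes through and is a bit more uniform than the paper's.
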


\begin{proof}
Fix a vertex $v \in V\T$. Since the sequence $(g_i)_{i=1}^{\infty}$ is unbounded, we may assume, by passing to a subsequence if necessary, that for each $i \geq 1$ the distance from $v$ to $g_i(v)$ is at least $i$ and, since $\T$ is locally finite, that the neighbour of $v$ on the path from $v$ to $g_i(v)$ is always the same vertex, $w \in V\T$ say. 

If infinitely many of the $g_i$ are translations with $v$ on the axis, we may suppose that they all are by passing to a subsequence. Then $w$ is on the axis of each of the $g_i$ too, and $g_i$ translates $\T_{(w,v)}$ to the half-tree $\T_{(g_i(w),g_i(v))}$, which contains the ball with centre $v$ and radius $i$. Choose $x \in G$ to fix $\T_{(w,v)}$ and act non-trivially on $\T_{(v,w)}$, which exists by assumption. Since $g_i x g_i^{-1}$ fixes the ball of radius $i$ around $v$ for each $i$, $g_i x g_i^{-1} \to \id$. Hence $x$ is a non-trivial element in the contraction group of a subsequence of the $g_i$ and we are done.

If only finitely many of the $g_i$ are translations with $v$ on their axis, then it may be assumed that no $a_i$ is a translation with $v$ on its axis. Then each of the $a_i$ are either elliptic elements or translations with $v$ not on the axis. Choose $x \in G$ that fixes $\T_{(v,w)}$ and acts non-trivially on $\T_{(w,v)}$. Then $x$ fixes the ball of radius $i$ around $g_i(v)$ for each $i$. Hence $g_i x g^{-1}_i$ fixes the ball of radius $i$ around $v$ for each $i$ and converges to the identity. Therefore $x$ is a non-trivial element of the contraction group of a subsequence of the $g_i$, which completes the proof.
\end{proof}

\begin{rem}
Note that any group satisfying the hypotheses of the above proposition is necessarily non-discrete. 
\end{rem}

\begin{cor}
Let $\T$ be an infinite locally finite tree without leaves and $G \le \Aut(\T)$. Suppose that the fixator of any half-tree of $\T$ in $G$ is non-trivial. Then every Cartan-like decomposition of $G$ has the contraction group property.
\end{cor}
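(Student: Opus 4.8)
The plan is to deduce the corollary almost immediately from Proposition \ref{prop:prop1}. Suppose $G = KAK$ is an arbitrary Cartan-like decomposition, so that $K \le G$ is a compact open subgroup and $A$ is a set of coset representatives. Let $(a_i)_{i=1}^{\infty} \subseteq A$ be a sequence of coset representatives; I must show it is either bounded or has a subsequence with non-trivial contraction group. If $(a_i)_{i=1}^{\infty}$ is bounded, there is nothing to prove, so assume it is unbounded. Then $(a_i)_{i=1}^{\infty}$ is an unbounded sequence in $G$, and since $G$ satisfies the hypotheses of Proposition \ref{prop:prop1} (the tree $\T$ is infinite, locally finite, without leaves, and the fixator in $G$ of any half-tree is non-trivial), that proposition furnishes a subsequence of $(a_i)_{i=1}^{\infty}$ with non-trivial contraction group. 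Hence the decomposition $G = KAK$ has the contraction group property.

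A small point worth checking is that the notion of ``bounded'' in Definition \ref{dfn:contraction} matches the notion used in Proposition \ref{prop:prop1}: in the permutation topology on $\Aut(\T)$, a subset of $\Aut(\T)$ is bounded (relatively compact) precisely when it moves each vertex $v$ within a bounded distance, equivalently (by local finiteness) within a finite set. This is exactly the failure condition negated at the start of the proof of Proposition \ref{prop:prop1}, where unboundedness is used to extract a subsequence with $d(v, g_i(v)) \ge i$. Since every Cartan-like decomposition of $G$ is by definition taken with respect to \emph{some} compact open subgroup $K$, and the argument above places no constraint on $K$ beyond compact-openness, the conclusion holds for all of them simultaneously.

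I do not anticipate any serious obstacle here: the corollary is a direct packaging of Proposition \ref{prop:prop1} into the language of Definition \ref{dfn:contraction}, and the only thing to be careful about is that the statement quantifies over \emph{every} Cartan-like decomposition — but since the sequence-extraction argument of Proposition \ref{prop:prop1} is insensitive to which compact open subgroup $K$ or which transversal $A$ one chose, this universal quantifier costs nothing. (Contrast this with the preceding $\PGL(\Qp)$ example, where the relevant group does \emph{not} satisfy the half-tree fixator hypothesis, so Proposition \ref{prop:prop1} does not apply and indeed some transversals fail the contraction group property.)
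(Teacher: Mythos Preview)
Your proposal is correct and takes essentially the same approach as the paper: the paper's proof is simply ``Follows directly from the previous proposition,'' and you have spelled out exactly that deduction from Proposition~\ref{prop:prop1}. The additional remarks about matching notions of boundedness and the contrast with the $\PGL(\Qp)$ example are reasonable elaborations but not required.
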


\begin{proof}
Follows directly from the previous proposition.
\end{proof}


\subsection{The Closed Range Property}

This section will study some general results concerning the closed range property. First we prove a generalisation of Theorem 4.2 from \cite{CW20}. We also show that a sufficient condition for a locally compact group to have the closed range property is that it has a cocompact subgroup with the closed range property.

\begin{thm}\label{thm:closedrangethm}
Let $G$ be a topologically simple group that has the contraction group property. Then $G$ has the closed range property.
\end{thm}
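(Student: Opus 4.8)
The plan is to show that if $\varphi : G \to H$ is a continuous homomorphism with non-closed range, then $\varphi$ must be trivial, which forces $\varphi(G) = \{1\}$ to be closed, a contradiction unless we allow the trivial image (and a trivial image is closed, so in fact no non-closed range can occur). Concretely, I would argue: assume $\varphi(G)$ is not closed in $H$; replace $H$ by $\overline{\varphi(G)}$, so we may assume $\varphi(G)$ is dense but not closed, and in particular $\varphi$ is not a topological embedding and $\varphi(G) \ne \overline{\varphi(G)}$. Pick $h \in \overline{\varphi(G)} \setminus \varphi(G)$, and let $(g_i)$ be a net (or, using a suitable metrisability/first-countability reduction afforded by van Dantzig and the structure of $G$, a sequence) in $G$ with $\varphi(g_i) \to h$. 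The key point is that $(g_i)$ cannot have a bounded (i.e.\ relatively compact) subsequence: if $(g_{i_j}) \to g \in G$, then by continuity $\varphi(g_{i_j}) \to \varphi(g) \in \varphi(G)$, forcing $h = \varphi(g) \in \varphi(G)$, contradiction. So after passing to a subsequence, $(g_i)$ is unbounded.

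Now I bring in the Cartan-like decomposition $G = KAK$ with the contraction group property, where $K$ is compact open. Write $g_i = k_i a_i k_i'$ with $k_i, k_i' \in K$ and $a_i \in A$. Since $K$ is compact, by passing to a subsequence we may assume $k_i \to k$ and $k_i' \to k'$ in $K$. Then $\varphi(a_i) = \varphi(k_i)^{-1}\varphi(g_i)\varphi(k_i')^{-1} \to \varphi(k)^{-1} h \varphi(k')^{-1} =: h' \in \overline{\varphi(G)}$. Moreover $(a_i)$ is still unbounded: if it had a convergent subsequence in $G$ then so would $(g_i) = (k_i a_i k_i')$, since multiplication by the convergent sequences $k_i, k_i'$ preserves relative compactness. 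By the contraction group property applied to the unbounded sequence $(a_i) \subseteq A$, after passing to a further subsequence there is a non-trivial $x \in \con((a_i))$, i.e.\ $a_i x a_i^{-1} \to \mathrm{id}_G$. Applying $\varphi$ and continuity: $\varphi(a_i)\varphi(x)\varphi(a_i)^{-1} \to \mathrm{id}_H$. But $\varphi(a_i) \to h'$ and $\varphi(a_i)^{-1} \to (h')^{-1}$ in $H$ (inversion is continuous), so the left-hand side converges to $h' \varphi(x) (h')^{-1}$; hence $h'\varphi(x)(h')^{-1} = \mathrm{id}_H$, i.e.\ $\varphi(x) = \mathrm{id}_H$. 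Thus $\ker\varphi$ contains the non-trivial element $x$, so $\ker\varphi$ is a non-trivial closed normal subgroup of $G$; by topological simplicity $\ker\varphi = G$, so $\varphi$ is trivial and $\varphi(G) = \{1\}$ is closed — contradicting that $\varphi(G)$ was assumed non-closed.

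Finally I should address the two technical gaps. First, the reduction to sequences: $G$ is not assumed second countable, so a priori one works with nets. One option is to note that $\varphi(G)$ being non-closed is witnessed inside a second-countable (hence metrisable) subgroup situation, or more robustly to carry the whole argument through with nets, since compactness of $K$ still gives convergent subnets $k_i \to k$, $k_i' \to k'$, and the contraction group property should be read as applying to unbounded nets as well (the proof of Proposition \ref{prop:prop1}, the motivating instance, works verbatim for nets). I would phrase Definition \ref{dfn:contraction} or its use so that nets are covered, or simply remark that the relevant sequences can be extracted. Second, I must make sure ``unbounded'' and ``bounded'' (relatively compact) interact correctly with the van Dantzig structure: in a t.d.l.c.\ group a set is relatively compact iff it meets only finitely many cosets of any fixed compact open subgroup, and this is exactly the notion of boundedness implicitly used in the Cartan-like decompositions; I would state this compatibility once. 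The main obstacle is precisely this bookkeeping — ensuring that passing between $(g_i)$ and $(a_i)$, and between sequences and nets, preserves (un)boundedness and the convergence $\varphi(a_i) \to h'$ — rather than any deep structural fact; the heart of the argument is the short contraction computation above, which exploits topological simplicity to upgrade ``$\ker\varphi \ne \{1\}$'' to ``$\varphi$ trivial''.
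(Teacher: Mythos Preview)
Your argument is correct and follows essentially the same route as the paper's own proof: write $g_i = k_i a_i k_i'$ via the Cartan-like decomposition, use compactness of $K$ to extract convergent subsequences $k_i\to k$, $k_i'\to k'$, and in the unbounded case apply the contraction group property together with topological simplicity to force $\varphi$ to be trivial. Your caveats about sequences versus nets are well-taken---indeed the paper itself works with sequences and tacitly relies on sequential compactness of $K$ without addressing metrisability, so you are being more careful than the original on this point.
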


\begin{proof}
Suppose the hypotheses of the theorem. Let $K \le G$ be a compact open subgroup such that there exists a Cartan-like decomposition $G = KAK$ with the contraction group property. Suppose that $\varphi: G \rightarrow H$ is a non-trivial continuous homomorphism to an arbitrary topological group $H$. Consider a sequence $(g_i)_{i=1}^{\infty} \subseteq G$ and suppose that $\varphi(g_i)$ converges to $h \in H$. It must be shown that $h \in \varphi(G)$. Now, there are sequences $(k_i)_{i=1}^{\infty},\ (k'_i)_{i=1}^{\infty}\subseteq K$ and $(a_i)_{i=1}^{\infty} \subseteq A$ such that $g_{i} = k_{i} a_{i} k'_{i}$ for each $i$. Passing to a subsequence if necessary, we may suppose, by compactness of $K$, that the sequences $(k_{i})_{i=1}^{\infty}$ and $(k'_{i})_{i=1}^{\infty}$ converge to elements $k, k' \in K$ respectively. Then $\varphi(a_{i}) = \varphi(k_{i})^{-1} \varphi(g_i) \varphi(k'_{i})^{-1} \to \varphi(k)^{-1} h \varphi(k')^{-1}\mbox{ as }i \to \infty.$ Thus the sequence $(\varphi(a_{i}))_{i=1}^\infty$ converges.

If the sequence $(a_{i})_{i=1}^{\infty}$ is bounded, it may be supposed, by passing to a subsequence if necessary, that the sequence is constant. Then $a_{i} = a \in A$ for each $i$ and $h = \varphi(k) \varphi(a) \varphi(k') \in \varphi(G)$. Thus the proof is complete. Suppose that the sequence $(a_i)_{i=1}^{\infty}$ is unbounded and set $\hat{a} := \lim_{i \to \infty} \varphi(a_{i})$. By assumption, there exists a subsequence $(a_{i_j})_{j=1}^{\infty} \subseteq (a_i)_{i=1}^{\infty}$ and a non-trivial $x \in \con((a_{i_j})_{j=1}^{\infty})$. Then $\hat{a} \varphi(x) \hat{a}^{-1} = \lim_{i \to \infty} \varphi(a_{i} x a_{i}^{-1}) = \lim_{j \to \infty} \varphi(a_{i_j} x a_{i_j}^{-1}) = \id_H$ by definition of the contraction subgroup for the sequence $(a_{i_j})_{j=1}^{\infty}$ and continuity of $\varphi$. Hence the kernel of $\varphi$ contains $\con((a_{i_j})_{i=1}^{\infty})$ and $\varphi$ must be the trivial homomorphism because $G$ is topologically simple, a contradiction. This completes the proof.
\end{proof}

Recall that given a topological group $G$ and a subgroup $H \le G$, $H$ is said to be \textit{cocompact} in $G$ if its quotient $G/H$ is compact. We now show that if a locally compact group $G$ has a cocompact subgroup with the closed range property, then $G$ also has the closed range property. We need the following two lemmas for the proof.

\begin{lem}\label{lem:ABcpt}
Let $G$ be a topological group and $A,B \subseteq G$ with $A$ compact and $B$ closed. Then the set $AB$ is closed.
\end{lem}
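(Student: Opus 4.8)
The plan is to prove $AB$ closed by showing that the complement is open, or equivalently by a net/sequence argument exploiting the compactness of $A$. I would take a point $g \notin AB$ and seek a neighbourhood of $g$ disjoint from $AB$. For each $a \in A$ we have $a^{-1}g \notin B$, and since $B$ is closed there is an open neighbourhood $V_a$ of $a^{-1}g$ with $V_a \cap B = \varnothing$. Using continuity of multiplication, choose open sets $W_a \ni a$ and $U_a \ni g$ with $W_a^{-1} U_a \subseteq V_a$. The sets $\{W_a\}_{a \in A}$ cover the compact set $A$, so finitely many $W_{a_1}, \dots, W_{a_n}$ suffice; put $U := \bigcap_{i=1}^n U_{a_i}$, an open neighbourhood of $g$. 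Then for any $a \in A$ we have $a \in W_{a_i}$ for some $i$, so $a^{-1}U \subseteq W_{a_i}^{-1} U_{a_i} \subseteq V_{a_i}$, which is disjoint from $B$; hence $a^{-1}U \cap B = \varnothing$ for every $a \in A$, i.e. $U \cap AB = \varnothing$. Therefore $AB$ is closed.

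Alternatively, and perhaps more cleanly, I would argue with nets: suppose $a_i b_i \to g$ with $a_i \in A$, $b_i \in B$. By compactness of $A$, pass to a subnet along which $a_i \to a \in A$. Then $b_i = a_i^{-1}(a_i b_i) \to a^{-1}g$ by joint continuity of multiplication and inversion, and since $B$ is closed, $a^{-1}g \in B$, so $g = a(a^{-1}g) \in AB$. This shows $AB$ contains all its limit points and is closed. This version avoids the covering argument entirely and is shorter, so it is the one I would actually write down.

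I do not expect any serious obstacle here; the only point requiring a moment's care is that compactness of $A$ is genuinely needed (the statement is false without it, e.g. $A = B = \mathbb{Z}$ in $\mathbb{R}$ with $A$ merely closed), and that one should pass to a subnet rather than a subsequence unless the group is metrisable, since general topological groups need not be first countable. If the paper's conventions permit assuming metrisability or at least first countability of the relevant groups then the sequential phrasing is fine; otherwise the net formulation is the safe choice.
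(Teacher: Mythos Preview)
Your proof is correct; both the open-cover argument and the net argument are standard and valid, and your remark about needing subnets rather than subsequences in a general topological group is the right caveat. The paper does not actually prove this lemma but simply cites \cite[Theorem~4.4]{HR79}, so your write-up supplies a self-contained proof where the paper only gives a reference.
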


The proof of this lemma can be found for instance in \cite[Theorem 4.4]{HR79}. We now prove the following lemma about locally compact groups.

\begin{lem}
\label{lem:cocompact}
Let $G$ be a locally compact group and suppose that $H$ is a cocompact subgroup of $G$. Then there exists a compact set $K \subseteq G$ such that $G = KH$.
\end{lem}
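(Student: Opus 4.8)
The plan is to use local compactness of $G$ together with the compactness of the quotient $G/H$, and the fact that the quotient map $\pi\colon G\to G/H$ is open and continuous. First I would pick, using local compactness and van Dantzig-style reasoning (or just the definition of a locally compact group), a compact neighbourhood $C$ of the identity in $G$; its interior $U:=\operatorname{int}(C)$ is a nonempty open set with compact closure $\overline{U}\subseteq C$. Since $\pi$ is an open map, $\{\pi(gU) : g\in G\}$ is an open cover of the compact space $G/H$, so finitely many translates suffice: there are $g_1,\dots,g_n\in G$ with $G/H=\bigcup_{i=1}^n \pi(g_iU)$.

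Next I would chase this back upstairs. Set $K:=\bigcup_{i=1}^n g_i\overline{U}$, which is a finite union of compact sets, hence compact. I claim $G=KH$. Indeed, take any $g\in G$. Then $\pi(g)\in\pi(g_iU)$ for some $i$, so $\pi(g)=\pi(g_iu)$ for some $u\in U$, which means $g_iu$ and $g$ lie in the same left coset of $H$, i.e.\ $g\in g_iuH\subseteq g_i\overline{U}H\subseteq KH$. Therefore $G=KH$ with $K$ compact, as required.

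The argument is short and the only point requiring a little care is making sure the quotient map $\pi\colon G\to G/H$ is open — but this is the standard fact that the canonical projection of a topological group onto a coset space is always open and continuous, so I would simply cite or recall it. There is no real obstacle here; the lemma is essentially the statement that cocompactness (compactness of the coset space) can always be witnessed by a genuine compact subset of $G$, and the proof is the routine ``pull a finite subcover back through an open map'' argument. If one wants to avoid even mentioning $G/H$ explicitly, an alternative is to work directly with the open cover $\{gU H : g\in G\}$ of $G$ by $H$-saturated open sets, push it to $G/H$, extract a finite subcover, and conclude as above; either way the content is the same.
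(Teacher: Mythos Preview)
Your proof is correct and essentially the same as the paper's: both use that the quotient map $\pi$ is open, cover the compact space $G/H$ by images of open sets with compact closure, extract a finite subcover, and take $K$ to be the union of the corresponding compact closures in $G$. The only cosmetic difference is that the paper covers with all open sets of compact closure while you use translates of a single one; the content is identical.
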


\begin{proof}
Let $\mathcal{U}$ be the collection of open sets in $G$ with compact closure and let $\pi:G \rightarrow G/H$ the canonical map. Since $\pi$ is an open map, $\{ \pi(U) | U \in \mathcal{U} \}$ forms an open covering of $G/H$. By compactness of $G/H$, there is a finite subcover say $\pi(U_{1}), \dots, \pi(U_{n})$ of $G/H$. Then it follows that $K = \bigcup_{i=1}^{n} \overline{U_{i}}$ is a compact subset of $G$, being a finite union of compact sets, and $\pi(K) = KH = G$ by construction.
\end{proof}

This leads us to the main result:

\begin{prop}\label{prop:cocomclosed}
Let $G$ be a locally compact group. Suppose that $H$ is a cocompact subgroup of $G$ with the closed range property. Then $G$ also has the closed range property.
\end{prop}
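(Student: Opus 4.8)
The plan is to express $\varphi(G)$ as the product of a compact set and a closed set, and then conclude via Lemma~\ref{lem:ABcpt}. Concretely, let $\varphi\colon G \to M$ be an arbitrary continuous homomorphism into a topological group $M$; we must show $\varphi(G)$ is closed in $M$.

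First I would apply Lemma~\ref{lem:cocompact}: since $G$ is locally compact and $H$ is cocompact in $G$, there is a compact set $C \subseteq G$ with $G = CH$. Because $\varphi$ is a homomorphism, this yields $\varphi(G) = \varphi(CH) = \varphi(C)\varphi(H)$. Now $\varphi(C)$ is compact, being the continuous image of the compact set $C$. And $\varphi(H)$ is closed in $M$: the restriction $\varphi|_H\colon H \to M$ is a continuous homomorphism from $H$, the group $H$ (with its subspace topology) has the closed range property by hypothesis, and hence $\varphi(H) = \varphi|_H(H)$ is closed in $M$. Applying Lemma~\ref{lem:ABcpt} with $A = \varphi(C)$ and $B = \varphi(H)$, the set $\varphi(G) = \varphi(C)\varphi(H)$ is closed in $M$. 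As $M$ and $\varphi$ were arbitrary, $G$ has the closed range property.

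I do not expect a genuine obstacle here: the argument is a short assembly of the two preceding lemmas. The only point worth flagging is where local compactness is actually used — namely, to upgrade the compactness of the quotient $G/H$ to the existence of an honest compact \emph{subset} $C$ of $G$ with $G = CH$ (Lemma~\ref{lem:cocompact}). Without that, one would merely control $\varphi(G)$ modulo $\varphi(H)$ in some quotient, which would not suffice to deduce that $\varphi(G)$ itself is closed in $M$.
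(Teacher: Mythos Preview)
Your proof is correct and follows essentially the same route as the paper: obtain a compact $C\subseteq G$ with $G=CH$ via Lemma~\ref{lem:cocompact}, write $\varphi(G)=\varphi(C)\varphi(H)$, and conclude using Lemma~\ref{lem:ABcpt} since $\varphi(C)$ is compact and $\varphi(H)$ is closed by the closed range property of $H$. The only differences are notational.
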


\begin{proof}
By Lemma~\ref{lem:cocompact}, there is a compact subset $K$ of $G$ such that $G = KH$. Let $\varphi: G \rightarrow L$ be a continuous homomorphism to an arbitrary topological group $L$. By assumption, we know that $\varphi(H)$ is closed in $L$, since $H$ satisfies the closed range property and the restriction of $\varphi$ to $H$ is continuous. Since $\varphi$ is continuous, $\varphi(K)$ is compact in $L$, and then by Lemma \ref{lem:ABcpt}, $\varphi(G) = \varphi(K) \varphi(H)$ is closed being the product of a compact set and a closed set.
\end{proof}


\section{Homomorphic Images of Tree Automorphism Groups}\label{chap:5}

In this section we study the contraction group and closed range properties of groups of automorphisms of infinite locally-finite trees. We extend the work on Cartan-like decompositions and the closed range property seen in \cite{CW20} to a larger class of groups acting on trees. Recall the definition of $G^{+_k}$ defined in the preliminaries section of the article. It is shown that, under standard assumptions, for any closed subgroup $G \le \Aut(\T)$, the group $G^{+_k}$ has the closed range property.  This leads to some closed range results for the (generalised) universal groups and groups acting on trees with a locally semiprimitive action. 

Throughout this section we assume that $\T$ is an arbitrary infinite locally finite tree without leaves. First we note that closed subgroups $G \le \Aut(\T)$ admit a Cartan-like decomposition with a vertex stabiliser as the compact open subgroup.

\begin{prop}\label{prop:kak}
Let $G \le \Aut(\T)$ be a closed subgroup and for a fixed $v \in V\T$ let $K := G_v$. Then $K$ is a compact open subgroup of $G$ and $G$ admits a Cartan-like decomposition $G = KAK$ where $A$ is in one-to-one correspondence with the orbits of $K$ acting on $Gv \cap S(v,n)$ for each $n \in \mathbb{N}$.
\end{prop}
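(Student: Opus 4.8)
The plan is to verify the three assertions in turn: that $K = G_v$ is compact, that it is open, and finally that $G$ decomposes as a double coset union $G = KAK$ with $A$ as described. The first two are standard consequences of van Dantzig's theorem and the permutation topology: $K = G_v$ is the stabiliser of a single vertex, hence open by definition of the base neighbourhoods $\mathcal{U}(g_0,\mathcal{F})$ (take $\mathcal{F} = \{v\}$ and $g_0 = \id$), and it is closed since it is the complement of a union of basic open sets; as $G$ is locally compact (being closed in $\Aut(\T)$, which is locally compact because $\T$ is locally finite) and $K$ is a closed subgroup that is compact — this last point follows because $K$ fixes $v$ and permutes each finite sphere $S(v,n)$, so $K$ embeds continuously as a closed subgroup of the compact product $\prod_{n} \Sym(S(v,n))$.

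For the decomposition, the key observation is that the double cosets $KgK$ correspond to $K$-orbits on $Gv$: indeed $KgK = Kg'K$ if and only if $g'v \in Kgv$, since $G_v = K$ acts on the right coset space $G/K \cong Gv$ and $KgK/K$ is exactly the $K$-orbit of $gv$. So choosing one representative $g$ from each $K$-orbit on $Gv$ yields a set $A$ with $G = \bigsqcup_{a \in A} KaK$, and this is the required Cartan-like decomposition since $K$ is compact open. Finally, because $K$ acts on $Gv$ preserving the distance $d(v,\cdot)$ (every element of $K$ fixes $v$ and is an isometry), the $K$-orbits on $Gv$ partition according to which sphere $S(v,n)$ they sit in; hence the $K$-orbits on $Gv$ are precisely the disjoint union over $n \in \mathbb{N}$ of the $K$-orbits on $Gv \cap S(v,n)$, giving the claimed one-to-one correspondence.

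I expect no serious obstacle here; this is essentially the tree-automorphism analogue of the classical $KAK$ (Cartan) decomposition for reductive groups over local fields, where $A$ plays the role of the torus representatives. The only point requiring a little care is making the correspondence "$A$ in one-to-one correspondence with the $K$-orbits on $Gv \cap S(v,n)$ for each $n$" precise: one should state that $A$ can be taken to be any transversal for the $K$-action on $Gv$, and that such a transversal is obtained by selecting, for each $n$ and each $K$-orbit $\mathcal{O} \subseteq Gv \cap S(v,n)$, a single $g \in G$ with $gv \in \mathcal{O}$. One might also remark that $Gv \cap S(v,n)$ can be empty for some $n$ (the orbit $Gv$ need not meet every sphere), which causes no difficulty — those values of $n$ simply contribute nothing to $A$.
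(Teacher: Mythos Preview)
Your proposal is correct and follows essentially the same approach as the paper: both identify the double cosets $K\backslash G/K$ with the $K$-orbits on $Gv$ via $KgK \mapsto Kg(v)$, then observe that these orbits stratify by the spheres $S(v,n)$. The only minor difference is that the paper dispatches compactness and openness of $K$ in one line by writing $K = G \cap \Aut(\T)_v$ with $\Aut(\T)_v$ compact open in $\Aut(\T)$ and $G$ closed, rather than arguing via an embedding into $\prod_n \Sym(S(v,n))$; your route is fine but slightly longer than necessary.
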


\begin{proof}
Fix $v \in V\T$. It is clear that $K = G_v$ is a compact open subgroup in the subspace topology on $G$, being the intersection of the compact open subgroup $\Aut(\T)_v$ with $G$. 

To show that $G$ admits a Cartan-like decomposition of the given form, enumerate the orbits of $K$ acting on the spheres $S(v,n) \cap Gv \subseteq \T$ for each $n \in \mathbb{N}$. For each orbit, choose a vertex $w$ in that orbit and an automorphism $a_w \in G$ that sends $v$ to $w$. Let $A$ be the collection of all the chosen $a_w$. We claim that $G = KAK$. Indeed, let $g \in G$.  There exists a vertex $w \in V\T$ in the $K$-orbit of $g(v)$ and an automorphism $a_w \in A$ that sends $v$ to $w$. Let $k \in K$ such that $kg(v) = w$. Then $a_{w}^{-1}kg(v) = v$, so $a_{w}^{-1}kg \in K$, thus $g \in Ka_{w}K$. It follows that $G = KAK$. 

We now show that the double cosets $KaK$ for distinct $a \in A$ are disjoint {\it i.e.\/} that $A$ is a set of coset representatives for the decomposition. Suppose that there exists $a_u, a_w \in A$ such that $Ka_u K = Ka_w K$. Then there exists $k_1, k_2 \in K$ such that $a_u = k_1 a_w k_2$. By definition of $a_u$ and $a_w$, $k_1$ maps $w$ to $u$, and so $u, w \in V\T$ are in the same $K$-orbit. Hence $a_u = a_w$ by construction of the set $A$.
\end{proof}

%
%

For use in the forthcoming theorem, we need the following three lemmas. The first lemma is Lemma 4 in \cite{MV12} restated for use here.

\begin{lem}\label{lem:moller}
\cite[Lemma 4]{MV12} Suppose that $G \le \Aut(\T)$ does not stabilise any non-empty proper subtree of $\T$. Then the following hold:
\begin{enumerate}[(i)]
   \item Suppose that there is some edge $\{u,v\} \in E\T$ such that the pointwise stabilisers of the half-trees $\T_{(u,v)}$ and $\T_{(v,u)}$ are non-trivial. Then the pointwise stabiliser of every half-tree in $\T$ is non-trivial.\
   \item Suppose that there is some edge $\{u,v\} \in E\T$ such that the pointwise stabiliser of $\T_{(u,v)}$ is trivial while the pointwise stabiliser of $\T_{(v,u)}$ is non-trivial. Then $G$ must fix an end of $\T$. 
\end{enumerate}
\end{lem}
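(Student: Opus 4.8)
The plan is to deduce both statements from elementary combinatorics of half-trees, using the hypothesis on $G$ only in the form: every non-empty $G$-invariant subtree of $\T$ equals $\T$ (equivalently, the convex hull of any $G$-orbit in $\T$ is all of $\T$). For part (i), recall that conjugation interchanges fixators of half-trees, $g\,\Fix_G(H)\,g^{-1}=\Fix_G(gH)$, and that $\Fix_G(H')\subseteq\Fix_G(H)$ whenever $H\subseteq H'$. So it suffices to show that for every half-tree $T$ of $\T$ there is $g\in G$ with $g(T)\subseteq\T_{(u,v)}$ or $g(T)\subseteq\T_{(v,u)}$; indeed then $T\subseteq g^{-1}(\T_{(u,v)})$ (resp.\ $g^{-1}(\T_{(v,u)})$), so $\Fix_G(T)$ contains the non-trivial subgroup $g^{-1}\Fix_G(\T_{(u,v)})g$ (resp.\ $g^{-1}\Fix_G(\T_{(v,u)})g$). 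To produce such a $g$, I would apply the observation to the orbit of the edge $\{u,v\}$: the convex hull of $G\cdot\{u,v\}$ is a non-empty $G$-invariant subtree, hence all of $\T$, and therefore is not contained in the (proper, convex) half-tree $T$; hence some translate $\{g(u),g(v)\}$ is not contained in $T$, i.e.\ (replacing $g$ by $g^{-1}$) at least one of $u,v$ lies outside $g(T)$. A short case check on which of $u,v$ is outside $g(T)$, using that $g(T)$ is connected and $u,v$ are adjacent, forces $g(T)$ to lie entirely on one side of the edge $\{u,v\}$, i.e.\ $g(T)\subseteq\T_{(u,v)}$ or $g(T)\subseteq\T_{(v,u)}$. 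Both hypotheses on the half-tree fixators are used here, since one does not control which of the two inclusions occurs.

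For part (ii), fix $\sigma\in\Fix_G(\T_{(v,u)})\setminus\{\id\}$, which exists by hypothesis. First I would observe that $g(\T_{(u,v)})\cap\T_{(u,v)}$ is non-empty for every $g\in G$: otherwise $\T_{(u,v)}\subseteq g(\T_{(v,u)})$, so $g\sigma g^{-1}$ fixes $\T_{(u,v)}$ pointwise and hence lies in $\Fix_G(\T_{(u,v)})=\{\id\}$, forcing $\sigma=\id$. Applying this to $g_1^{-1}g_2$ shows that $\mathcal F:=\{g(\T_{(u,v)}):g\in G\}$ is a family of half-trees any two of which intersect. On the other hand $\bigcap_{g\in G}g(\T_{(u,v)})$ is $G$-invariant and contained in the proper half-tree $\T_{(u,v)}$, so by the hypothesis on $G$ it is empty. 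Now I would run a Helly/compactness argument: each $H\in\mathcal F$, together with its set of ends, is a closed subset of the compact space $\T\cup\partial\T$ (here local finiteness of $\T$ enters); finite subfamilies of $\mathcal F$ have a common vertex by the classical Helly property for subtrees of a tree; so by compactness $\mathcal F$ has a common point, which — the intersection being empty on vertices — is an end $\xi\in\partial\T$. This $\xi$ is the unique common end of $\mathcal F$, since two distinct common ends would force the bi-infinite geodesic joining them into every member of $\mathcal F$, hence into their empty intersection; and $\mathcal F$ is visibly $G$-invariant, so $G$ fixes $\xi$.

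The routine part is (i), which is bookkeeping once the convex-hull observation is in place. The step I expect to require the most care is the passage in (ii) from the pairwise-intersecting family $\mathcal F$ to a single $G$-fixed end: the naive idea of orienting each edge "into" the members of $\mathcal F$ does not yield a coherent orientation, so one genuinely needs the compactness argument in $\T\cup\partial\T$ together with the uniqueness of the common end. A small recurring point to keep in mind is that half-trees of an infinite, locally finite, leafless tree are infinite, so their end-boundaries are non-empty and all the convex hulls used behave as expected.
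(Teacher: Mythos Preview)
The paper does not supply its own proof of this lemma; it is quoted from \cite[Lemma~4]{MV12} and used as a black box, so there is no in-paper argument to compare your proposal against.

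That said, your proposal is correct. In~(i), the reduction to finding $g\in G$ with $g(T)$ contained in one of the two given half-trees is the right move, and the ``short case check'' goes through: once some vertex of $\{u,v\}$ lies outside the connected set $g(T)$, no path within $g(T)$ can cross the edge $\{u,v\}$, so $g(T)$ lies entirely in $\T_{(u,v)}$ or in $\T_{(v,u)}$; either inclusion suffices since both fixators are assumed non-trivial, as you note. In~(ii), the pairwise-intersection property of $\mathcal F$ is correctly derived from triviality of $\Fix_G(\T_{(u,v)})$, the vertex-intersection $\bigcap\mathcal F$ is indeed a $G$-invariant subtree contained in a proper half-tree and hence empty, and the passage to a unique common end via the Helly property for subtrees together with compactness of $\T\cup\partial\T$ is sound; the uniqueness step (two common ends would force a bi-infinite geodesic into the empty vertex-intersection) is also correct. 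Your remark that local finiteness is needed for the compactness step is apt: this is a standing hypothesis in Section~\ref{chap:5}, so the argument is valid in the context where the lemma is invoked.
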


The following result of Tits \cite[L\'emm\`e 4.4]{Tits70} will also be needed:

\begin{lem}\label{lem:titslem1}
\cite[L\'emm\`e 4.4]{Tits70} Suppose that $N$ and $G$ are non-trivial subgroups of $\Aut(\T)$ and $N$ is normalised by $G$. If $G$ does not stabilise any proper non-empty subtree or fix any end of $\T$, then the same is true for $N$.
\end{lem}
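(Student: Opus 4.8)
The plan is to run the standard structure-theoretic argument for group actions on trees, essentially Tits' original proof. I will use two facts from \cite{Tits70}: (a) a subgroup $H\le\Aut(\T)$ that fixes no vertex and no end of $\T$ contains a hyperbolic element; and (b) for such $H$, the union $\T_H$ of the axes of all hyperbolic elements of $H$ is the unique minimal non-empty $H$-invariant subtree, so in particular every non-empty $H$-invariant subtree contains $\T_H$. Set $E(N):=\{\xi\in\partial\T : n\xi=\xi \text{ for all } n\in N\}$ and $\Fix_\T(N):=\{v\in V\T : nv=v \text{ for all } n\in N\}$; the latter is a subtree, being an intersection of fixed-point subtrees, and because $G$ normalises $N$ both sets are $G$-invariant: for $g\in G$, $n\in N$, $v\in\Fix_\T(N)$ one has $n(gv)=g(g^{-1}ng)v=gv$ as $g^{-1}ng\in N$, and similarly for ends.

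First I would show that $N$ fixes no vertex: if $\Fix_\T(N)\ne\emptyset$, then it is a non-empty $G$-invariant subtree, hence equals $\T$ since $G$ stabilises no proper non-empty subtree, forcing $N$ to be trivial — a contradiction. Next I would show that $N$ fixes no end. Suppose $E(N)\ne\emptyset$. If $|E(N)|\ge 3$, then $N$ fixes the median vertex of any three of its fixed ends, contradicting the previous step. If $|E(N)|=1$, then $G$-invariance of $E(N)$ forces $G$ to fix an end, contrary to hypothesis. If $E(N)=\{\xi,\xi'\}$, then $N$ stabilises the line $L=[\xi,\xi']$ and, fixing both its ends, acts on $L$ by translations; having no fixed vertex, these translations are nontrivial, so $L$ is their common axis, and $G$-invariance of $\{\xi,\xi'\}$ makes $L$ a $G$-invariant subtree, whence $L=\T$ by minimality of $G$, i.e.\ $\T$ is a bi-infinite line. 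This last case must be excluded: as literally stated the statement fails when $\T$ is a line (an infinite dihedral group acting on $\mathbb Z$ normalises a nontrivial group of translations, which fixes both ends), so I would add the harmless hypothesis that $\T$ is not a line, which holds in all applications in this paper. Under that hypothesis, $N$ fixes no end.

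Now $N$ fixes no vertex and no end, so fact (b) applies: the union $\T_N$ of the axes of the hyperbolic elements of $N$ is the unique minimal non-empty $N$-invariant subtree. For $g\in G$, the map $h\mapsto ghg^{-1}$ is a bijection of the set of hyperbolic elements of $N$ (using $gNg^{-1}=N$ together with conjugation-invariance of translation length) carrying $\mathrm{axis}(h)$ to $g\cdot\mathrm{axis}(h)$, so $g\T_N=\T_N$; thus $\T_N$ is a non-empty $G$-invariant subtree and hence $\T_N=\T$ by minimality of $G$. Since every non-empty $N$-invariant subtree contains $\T_N=\T$, the group $N$ stabilises no proper non-empty subtree. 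Combined with the previous paragraph, this is precisely the asserted conclusion.

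The $G$-invariance verifications and the trichotomy on $|E(N)|$ are routine. I expect the main work, if one does not simply cite \cite{Tits70}, to be establishing facts (a) and (b): that an action with no fixed vertex and no fixed end contains a hyperbolic element (via the observation that two elliptic elements with disjoint fixed-point sets have a hyperbolic product, together with a Helly-type argument on pairwise-intersecting subtrees), and that the union of axes is then connected and minimal (a ping-pong estimate on products of powers of hyperbolic elements). The only genuine subtlety is the degenerate line case flagged above.
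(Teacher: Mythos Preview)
The paper gives no proof of this lemma at all: it is stated with a bare citation to \cite[L\'emm\`e 4.4]{Tits70} and used as a black box. So there is nothing to compare your argument against except Tits' original, and what you have written is essentially a faithful reconstruction of that proof: reduce to showing $N$ has no global fixed vertex or end using $G$-invariance of $\Fix_\T(N)$ and $E(N)$, then invoke the existence and uniqueness of the minimal $N$-invariant subtree $\T_N$ and its $G$-invariance.

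Your flag on the degenerate case is well taken and worth recording: as literally stated the lemma is false when $\T$ is a bi-infinite line, since the infinite dihedral group $G=\Aut(\T)$ then stabilises no proper subtree and fixes no end, yet its translation subgroup $N$ fixes both ends. Tits' own formulation in \cite{Tits70} avoids this (his Lemma~4.4 concludes only that $N$ leaves invariant no proper subtree and no \emph{single} end, which remains true on the line), so the paper's paraphrase is slightly too strong. In the paper's applications this causes no harm: the lemma is invoked only for $N=G^{+_k}$ assumed non-trivial, and on a line every half-tree fixator is trivial, forcing $G^{+_k}=\{\id\}$. Your suggested fix---excluding the line, or restating the conclusion for a single fixed end---is the correct one.
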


We prove the following lemma which is a consequence of the previous two. We note that Property $P_k$ is defined and investigated in \cite{BEW15}.

\begin{lem}
Suppose that $G \le \Aut(\T)$ and assume that $G$ does not stabilise any proper non-empty subtree or fix an end of $\T$, and satisfies Property $P_k$. If $G^{+_k}$ is non-trivial, then the fixator in $G^{+_k}$ of every half-tree in $\T$ is non-trivial.
\end{lem}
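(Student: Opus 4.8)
The plan is to leverage the two preceding lemmas (Möller's Lemma~\ref{lem:moller} and Tits' Lemma~\ref{lem:titslem1}) applied to the normal subgroup $N = G^{+_k}$ inside $G$. First I would observe that $G^{+_k}$ is normalised by $G$: each fixator $F_{k,e} = \Fix_G(B(v,k-1)\cup B(w,k-1))$ for $e = \{v,w\}$ is mapped by conjugation by $g \in G$ to $F_{k,g(e)}$, so the generating set $\{F_{k,e} : e \in E\T\}$ is permuted by $G$, hence $G^{+_k} \trianglelefteq G$. Since by hypothesis $G$ stabilises no proper non-empty subtree and fixes no end, and $G^{+_k}$ is assumed non-trivial, Lemma~\ref{lem:titslem1} (with the roles ``$N = G^{+_k}$'' and ``$G = G$'') gives that $G^{+_k}$ also stabilises no proper non-empty subtree and fixes no end of $\T$.

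Next I would bring in Property $P_k$ to locate a half-tree whose $G^{+_k}$-fixator is non-trivial. The key point is that Property $P_k$ identifies the rigid stabiliser of a half-tree $\T_{(v,w)}$ (or at least of the ``$k$-interior'' of a half-tree) inside $G$ with a direct factor coming from $F_{k,\{v,w\}}$; more concretely, $F_{k,e}$ decomposes as a direct product of its restrictions to the two sides of the edge $e$, and these restriction subgroups lie in $G$ (this is essentially the content of Property $P_k$ in \cite{BEW15}). Since $G^{+_k}$ is non-trivial, some generator $F_{k,e}$ is non-trivial, and by the direct-product decomposition afforded by $P_k$ at least one of its two ``one-sided'' pieces is non-trivial — giving an edge $\{u,v\}$ for which the fixator in $G^{+_k}$ of at least one of $\T_{(u,v)}$, $\T_{(v,u)}$ is non-trivial. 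If both one-sided pieces are non-trivial, we are in case~(i) of Lemma~\ref{lem:moller} applied to the group $G^{+_k}$ (which we have just shown stabilises no proper subtree), and we conclude directly that the fixator in $G^{+_k}$ of every half-tree is non-trivial, as desired. If only one side is non-trivial, then case~(ii) of Lemma~\ref{lem:moller} applied to $G^{+_k}$ would force $G^{+_k}$ to fix an end of $\T$, contradicting the previous paragraph; so this case cannot occur.

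The main obstacle I anticipate is the careful bookkeeping in the middle step: one must check that the ``one-sided'' restrictions of $F_{k,e}$ are genuinely realised by elements of $G$ (not merely of $\Aut(\T)$), which is exactly where Property $P_k$ enters, and one must match up the various notions of half-tree fixator (fixator of $\T_{(v,w)}$ versus fixator of $B(v,k-1)\cup B(w,k-1)$ versus rigid stabiliser) so that the hypotheses of Lemma~\ref{lem:moller} are literally satisfied. A minor subtlety is that Lemma~\ref{lem:moller} is stated for fixators of half-trees while $F_{k,e}$ fixes slightly less than a half-tree; this is harmless because a non-trivial element fixing the ``$k$-thickened'' half-tree in particular fixes the corresponding genuine half-tree $\T_{(v,w)}$, but it should be spelled out. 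Once the single non-trivial one-sided fixator is produced and the end-fixing case is excluded via Lemma~\ref{lem:titslem1}, the conclusion follows immediately from part~(i) of Lemma~\ref{lem:moller}.
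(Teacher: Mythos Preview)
Your proposal is correct and follows essentially the same route as the paper's proof: normality of $G^{+_k}$ plus Tits' Lemma~\ref{lem:titslem1} to transfer the ``no invariant subtree / no fixed end'' hypotheses to $G^{+_k}$; then Property~$P_k$ applied to a non-trivial $F_{k,e}$ to produce a non-trivial half-tree fixator in $G^{+_k}$; then M\"oller's Lemma~\ref{lem:moller}(ii) to exclude the one-sided case and Lemma~\ref{lem:moller}(i) to conclude. The only remark worth making is that the ``subtlety'' you flag about $F_{k,e}$ fixing ``slightly less than a half-tree'' does not arise: the decomposition furnished by Property~$P_k$ is precisely $F_{k,e} = \Fix_{F_{k,e}}(\T_{(v,w)})\cdot\Fix_{F_{k,e}}(\T_{(w,v)})$, so the non-trivial factor you obtain already fixes a genuine half-tree and lies in $G^{+_k}$, and Lemma~\ref{lem:moller} applies without any further matching of notions.
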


\begin{proof}
Since $G^{+_k}$ is normal in $G$, by Lemma \ref{lem:titslem1}, $G^{+_k}$ does not stabilise any proper non-empty subtree or end of $\T$. Since $G^{+_k}$ is non-trivial, there exists an edge $e = \{v,w\} \in E\T$ and a non-trivial element $g \in F_{k,e} = \Fix_G(B(v,k) \cap B(w,k))$. Now we know that $F_{k,e} = \Fix_{F_{k,e}}(\T_{(v,w)}) \Fix_{F_{k,e}}(\T_{(w,v)})$ since $G$ satisfies Property $P_k$. Thus, since $F_{k,e}$ is non-trivial, there must exist a non-trivial element $g'$ in either $\Fix_{F_{k,e}}(\T_{(w,v)})$ or $\Fix_{F_{k,e}}(\T_{(v,w)})$. Clearly $g' \in G^{+_k}$. Since $G^{+_k}$ does not stabilise any non-empty subtree or fix an end of $\T$, an application of Lemma \ref{lem:moller}$(ii)$ followed by an application of Lemma \ref{lem:moller}$(i)$ then shows that the stabiliser in $G^{+_k}$ of every half-tree in $\T$ must be non-trivial.
\end{proof}
 
 We come to the following theorem which shows that for closed subgroups $G \le \Aut(\T)$, the groups $G^{+_k}$ have the closed range property under certain assumptions.

\begin{thm}\label{thm:mainthm}
Let $G \le \Aut(\T)$  be a closed subgroup and suppose that $G$ does not stabilise any proper non-empty subtree or fix an end of $\T$. If $G$ satisfies Property $P_k$, then $G^{+_k}$ has the closed range property.
\end{thm}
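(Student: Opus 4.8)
The plan is to combine the structural results established in the excerpt into a short chain of implications. First I would verify that $G^{+_k}$ is topologically simple (or reduce to that case): the group $G^{+_k}$ is generated by the fixators $F_{k,e}$, it is normal in $G$, and under the stated hypotheses one expects that either $G^{+_k}$ is trivial — in which case the closed range property holds vacuously — or it is topologically simple by a Tits-independence / simplicity argument in the spirit of \cite{Tits70,BEW15}. I would invoke Property $P_k$ together with the fact, proved in the lemma immediately preceding the theorem, that the fixator in $G^{+_k}$ of every half-tree of $\T$ is non-trivial, to run the standard simplicity argument for groups with an independence property.

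Next I would apply Proposition~\ref{prop:prop1} (via its Corollary): since $G^{+_k}\le\Aut(\T)$ and the fixator in $G^{+_k}$ of every half-tree is non-trivial, every unbounded sequence in $G^{+_k}$ has a subsequence with non-trivial contraction group, so every Cartan-like decomposition of $G^{+_k}$ has the contraction group property. To get an actual Cartan-like decomposition I would use Proposition~\ref{prop:kak}: since $G$ is closed in $\Aut(\T)$, so is $G^{+_k}$ (it is closed because Property $P_k$ forces $G^{+_k}=G^{(k)+_k}$ or by a direct argument), and hence $G^{+_k}=KAK$ for $K$ a vertex stabiliser in $G^{+_k}$. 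Combining, $G^{+_k}$ has the contraction group property in the sense of Definition~\ref{dfn:contraction}.

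Finally I would invoke Theorem~\ref{thm:closedrangethm}: a topologically simple group with the contraction group property has the closed range property. Applying this to $G^{+_k}$ completes the proof, with the trivial case handled separately as noted above.

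The main obstacle, I expect, will be establishing topological simplicity of $G^{+_k}$ cleanly: one needs to know that Property $P_k$ plus the non-trivial half-tree fixators (and the hypothesis that $G$, hence $G^{+_k}$ by Lemma~\ref{lem:titslem1}, fixes no proper subtree or end) yield simplicity of $G^{+_k}$, which is the $P_k$-analogue of Tits' classical simplicity theorem for $\Aut(\T)^{+}$; I would either cite this from \cite{BEW15} or reprove it using the independence property. A secondary technical point is checking that $G^{+_k}$ is closed in $\Aut(\T)$ so that Proposition~\ref{prop:kak} applies; this should follow from $G$ being closed together with the local nature of Property $P_k$, but it deserves an explicit sentence.
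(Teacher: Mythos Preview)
Your proposal is correct and follows essentially the same route as the paper: reduce to the case where $G^{+_k}$ is non-trivial, cite simplicity from \cite{BEW15}, use the preceding lemma together with Proposition~\ref{prop:prop1} to obtain the contraction group property for a Cartan-like decomposition coming from Proposition~\ref{prop:kak}, and conclude via Theorem~\ref{thm:closedrangethm}.

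The two points you flag as obstacles are in fact simpler than you anticipate. For simplicity, the paper does not reprove anything: \cite[Theorem~7.3]{BEW15} already states that under the hypotheses $G^{+_k}$ is either trivial or (abstractly) simple, so a single citation suffices. For closedness of $G^{+_k}$ in $\Aut(\T)$, there is no need to invoke $G^{(k)}$ or Property~$P_k$: the group $G^{+_k}$ contains the basic open set $\mathcal{U}(\id,B(v,k)\cap B(w,k))$ for any edge $\{v,w\}$, hence is open in $G$, hence closed in $G$, hence closed in $\Aut(\T)$ since $G$ is closed.
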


\begin{proof}
First we note that $G^{+_k}$ is open in $G$ since it contains the open neighbourhood $\mathcal{U}(\id,B(v,k) \cap B(w,k))$ of the identity, where $\{v,w\} \in E\T$. Since $G^{+_k}$ is open in $G$, it is also closed in $G$, and since $G$ is closed in $\Aut(\T)$, it follows that $G^{+_k}$ is closed in $\Aut(\T)$. By Proposition \ref{prop:kak}, $G^{+_k}$ admits a Cartan-like decomposition $G^{+_k} = KAK$, with $K = G^{+_k}_v$ and $A \subseteq G^{+_k}$ as constructed in the proposition. 

We also know that $G^{+_k}$ is either trivial or simple by \cite[Theorem 7.3]{BEW15}. If $G^{+_k}$ is trivial, then $G^{+_k}$ clearly satisfies the closed range property, so we may suppose that $G^{+_k}$ is non-trivial and simple. By Theorem \ref{thm:closedrangethm}, we just need to show that the Cartan-like decomposition $G^{+_k} = KAK$ has the contraction group property. This follows from an application of the previous lemma followed by an application of Proposition \ref{prop:prop1}.
\end{proof}

We now state a number of corollaries that result from this theorem.

\begin{cor}
Let $G \le \Aut(\T)$ and suppose that $G$ does not fix any proper non-empty subtree or fix an end of $\T$. Then $(G^{(k)})^{+_k}$ has the closed range property.
\end{cor}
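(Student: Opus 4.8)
The plan is to apply Theorem~\ref{thm:mainthm} to the group $G^{(k)}$ in place of $G$. The corollary follows immediately once I verify that $G^{(k)}$ satisfies the hypotheses of that theorem, namely: (a) $G^{(k)}$ is a closed subgroup of $\Aut(\T)$; (b) $G^{(k)}$ does not stabilise any proper non-empty subtree or fix an end of $\T$; and (c) $G^{(k)}$ satisfies Property $P_k$. Given these, Theorem~\ref{thm:mainthm} yields that $(G^{(k)})^{+_k}$ has the closed range property, which is exactly the assertion.

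For (a), the $k$-closure $G^{(k)}$ is by construction the set of automorphisms that agree locally on every ball of radius $k$ with some element of $G$; this is a standard fact (the $k$-closure is closed, indeed it is the closure of $G$ in a suitable sense — see \cite{BEW15}), and I would just cite it or give the one-line topological argument that a limit of automorphisms locally matching $G$ still locally matches $G$. For (c), Property $P_k$ holds for $G^{(k)}$ essentially by the definition of the $k$-closure: the local action of $G^{(k)}$ around any edge is as large as possible subject to the $k$-local constraints, so the fixator $F_{k,e}$ in $G^{(k)}$ factors as the product of the fixators of the two half-trees; again this is recorded in \cite{BEW15} and I would cite it. The slightly more delicate point is (b): I need that $G^{(k)}$ does not stabilise a proper non-empty subtree and does not fix an end. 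Since $G \le G^{(k)}$, any subtree stabilised by $G^{(k)}$ is in particular stabilised by $G$, and any end fixed by $G^{(k)}$ is fixed by $G$; by the hypothesis on $G$ there are none, so the same holds for $G^{(k)}$. This monotonicity is the only place the hypothesis on $G$ is used.

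I expect the main (mild) obstacle to be purely expository: making sure the facts about $G^{(k)}$ — that it is closed and that it satisfies Property $P_k$ — are cleanly quoted from \cite{BEW15} with the correct statement numbers, rather than re-proved. There is no real mathematical difficulty here: the corollary is simply the observation that $G^{(k)}$ is a closed subgroup satisfying Property $P_k$ with the same ``no fixed subtree or end'' behaviour as $G$ (inherited by containment $G \le G^{(k)}$), so Theorem~\ref{thm:mainthm} applies verbatim. A short proof reading roughly ``$G^{(k)}$ is a closed subgroup of $\Aut(\T)$ satisfying Property $P_k$ by \cite{BEW15}; since $G \le G^{(k)}$ it stabilises no proper non-empty subtree and fixes no end of $\T$; now apply Theorem~\ref{thm:mainthm}'' is what I would write.
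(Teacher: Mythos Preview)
Your proposal is correct and matches the paper's proof essentially verbatim: the paper notes $G \le G^{(k)}$ to inherit the ``no invariant subtree or end'' hypothesis, cites \cite{BEW15} for closedness and Property $P_k$ of $G^{(k)}$, and applies Theorem~\ref{thm:mainthm}. The only extra precision in the paper is the specific citations (Proposition~3.4 for closedness, and Proposition~5.2 combined with Corollary~6.4 for Property~$P_k$).
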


\begin{proof}
Since $G^{(k)}$ contains $G$, $G^{(k)}$ does not fix any non-empty subtree or end of $\T$. Also, $G^{(k)}$ is closed by \cite[Proposition 3.4]{BEW15}, and satisfies Property $P_k$ by combining Proposition 5.2 and Corollary 6.4 of \cite{BEW15}. Now apply the previous theorem.
\end{proof}

Next, using the notation of universal groups as set out in \cite{Tor20}, we prove the following two results. Since the generalised universal groups $U_{k}(F)$ satisfy Property $P_k$ \cite[Proposition 3.7]{Tor20} and do not stabilise any proper non-empty subtree or fix an end of $\T$, this also gives us the following.

\begin{cor}
\label{cor:Ud}
Let $F \le \Aut(B_{d,k})$. Then $U_{k}(F)^{+_k}$ satisfies the closed range property.
\end{cor}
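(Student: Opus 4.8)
The plan is to obtain the corollary as an immediate application of Theorem~\ref{thm:mainthm} to the group $G = U_k(F)$, regarded as a subgroup of $\Aut(\Tq)$, where $\Tq$ is the $d$-regular tree and $B_{d,k}$ denotes a ball of radius $k$ in it. All the work consists of checking that $U_k(F)$ satisfies the hypotheses of that theorem, namely: $U_k(F)$ is a closed subgroup of $\Aut(\Tq)$; it neither stabilises a proper non-empty subtree of $\Tq$ nor fixes an end of $\Tq$; and it has Property $P_k$.

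First I would invoke the standard structure theory of generalised universal groups from \cite{Tor20}: for every $F \le \Aut(B_{d,k})$ the group $U_k(F)$ is closed in $\Aut(\Tq)$ and acts transitively on $V\Tq$. Vertex-transitivity alone forbids a $U_k(F)$-invariant proper non-empty subtree (such a subtree, being non-empty, connected and $G$-invariant, would contain every vertex and hence be all of $\Tq$), and $U_k(F)$ fixes no end of $\Tq$, for instance because it contains hyperbolic elements whose axes do not share an end; both of these last two facts are the ones already recorded in the discussion preceding the statement. Finally, Property $P_k$ for $U_k(F)$ is exactly \cite[Proposition~3.7]{Tor20}.

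Having verified the three hypotheses, Theorem~\ref{thm:mainthm} applies verbatim with $G = U_k(F)$ and $\T = \Tq$, and delivers the conclusion that $U_k(F)^{+_k}$ has the closed range property, which is the statement of the corollary. I do not expect a genuine obstacle here; the only point that needs a moment's attention is purely notational, namely confirming that the subgroup denoted $U_k(F)^{+_k}$ agrees with $\langle F_{k,e} \mid e \in E\Tq \rangle$ as defined in the preliminaries, so that Proposition~\ref{prop:kak} and Theorem~\ref{thm:mainthm} indeed apply to it. Everything else --- in particular the dichotomy that $U_k(F)^{+_k}$ is either trivial or topologically simple, which is used inside the proof of Theorem~\ref{thm:mainthm} --- is already subsumed by the hypotheses just checked, via \cite[Theorem~7.3]{BEW15}.
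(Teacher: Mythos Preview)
Your proposal is correct and follows essentially the same route as the paper: the paper simply notes that $U_k(F)$ satisfies Property~$P_k$ by \cite[Proposition~3.7]{Tor20} and neither stabilises a proper non-empty subtree nor fixes an end, and then invokes Theorem~\ref{thm:mainthm}. Your write-up supplies a bit more justification for the minimality and end-freeness hypotheses (via vertex-transitivity) and makes the closedness hypothesis explicit, but the argument is the same.
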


The theorem also allows us to show that the universal groups $U_{1}(\F)$ satisfy the closed range property when $\F$ is transitive and generated by point stabilisers.

\begin{cor}
\label{cor:U1}
Let $\F \le \Sym(d)$. Then $U_{1}(\F)^{+_1}$ has the closed range property. Moreover, if $\F$ is transitive and generated by point stabilisers, then $U_{1}(\F)$ has the closed range property.
\end{cor}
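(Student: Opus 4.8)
The plan is to derive both statements from the machinery already assembled. For the first claim, I would simply specialise Corollary \ref{cor:Ud} to the case $k=1$: given $F \le \Sym(d)$, we view $F$ as a subgroup of $\Aut(B_{d,1})$ in the obvious way (the local action on the ball of radius $1$ is exactly a permutation of the $d$ neighbours), so that $U_1(F)$ in the sense of \cite{Tor20} agrees with the classical Burger--Mozes universal group. Then $U_1(F)^{+_1}$ has the closed range property by Corollary \ref{cor:Ud}. Alternatively, and perhaps more transparently for the reader, one can invoke Theorem \ref{thm:mainthm} directly with $G = U_1(F)$: this group is closed in $\Aut(\Tq)$, it does not stabilise any proper non-empty subtree or fix an end (it acts transitively on vertices and on the two ends of any bi-infinite geodesic is not fixed), and it satisfies Property $P_1$ (this is the classical independence property of Burger--Mozes, and is the $k=1$ instance of \cite[Proposition 3.7]{Tor20}). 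Hence $G^{+_1} = U_1(F)^{+_1}$ has the closed range property.

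For the second claim, assume in addition that $F$ is transitive and generated by point stabilisers. Under these hypotheses there is a well-known identification, due to Burger--Mozes, of $U_1(F)^{+_1}$ with the subgroup $U_1(F)^+$ of index-two-type origin, and more usefully the fact that $U_1(F)^{+_1}$ is \emph{cocompact} in $U_1(F)$. Indeed, $U_1(F)^{+_1}$ contains all edge-fixators $F_{1,e} = \Fix_{U_1(F)}(\{v,w\})$ for $e = \{v,w\} \in E\Tq$, so it is open, and the quotient $U_1(F)/U_1(F)^{+_1}$ is the (discrete) group generated by the images of the vertex stabilisers; when $F$ is transitive and generated by point stabilisers this quotient is finite (in fact it has order at most $2$, corresponding to the type-preserving subgroup). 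Thus $U_1(F)^{+_1}$ has finite index, a fortiori cocompact, in $U_1(F)$. Since $U_1(F)$ is locally compact and $U_1(F)^{+_1}$ is a cocompact subgroup with the closed range property by the first part, Proposition \ref{prop:cocomclosed} yields that $U_1(F)$ itself has the closed range property.

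The step I expect to require the most care is verifying that $U_1(F)^{+_1}$ is cocompact (equivalently, of finite index) in $U_1(F)$ precisely under the hypotheses ``$F$ transitive and generated by point stabilisers''. This is the place where the two hypotheses are genuinely used: transitivity of $F$ is needed so that $U_1(F)$ acts transitively on $V\Tq$, and the ``generated by point stabilisers'' condition is exactly what forces the local action to be generated by its subgroups fixing a neighbour, which in turn forces $U_1(F)^{+_1}$ to surject onto the full type-preserving part of $U_1(F)$. The cleanest route is to cite the relevant statement from Burger--Mozes (or from \cite{Tor20}) that under these hypotheses $U_1(F)^+$ has index $2$ in $U_1(F)$ and coincides with $U_1(F)^{+_1}$; the remaining implication ``finite index $\Rightarrow$ cocompact $\Rightarrow$ closed range property for the overgroup'' is then immediate from Proposition \ref{prop:cocomclosed}. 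Everything else is bookkeeping: openness and closedness of $U_1(F)^{+_1}$, and the fact that a continuous homomorphism restricted to a closed subgroup is continuous.
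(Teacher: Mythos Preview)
Your proposal is correct and follows essentially the same route as the paper: the first assertion is obtained as the $k=1$ case of Corollary~\ref{cor:Ud}, and for the second the paper invokes \cite[Proposition~3.2.1]{BM00} to get that $U_1(F)^{+_1}$ has index~$2$ in $U_1(F)$ under the stated hypotheses, then applies Proposition~\ref{prop:cocomclosed}. Your additional discussion of why the hypotheses are needed and the alternative direct appeal to Theorem~\ref{thm:mainthm} are fine but not required.
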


\begin{proof}
That $U_{1}(\F)^{+_1}$ has the closed range property is just a special case of the previous corollary. When $\F$ is transitive and generated by point stabilisers, $U_{1}(\F)^{+_1}$ has index 2 in $U_{1}(\F)$ by \cite[Proposition 3.2.1]{BM00}, and an application of Proposition \ref{prop:cocomclosed} shows that $U_{1}(\F)$ has the closed range property.
\end{proof}

\begin{rem}
The hypotheses of Corollary~\ref{cor:U1} imply that $U_{1}(\F)^{+_1}$ is non-trivial. On the other hand, the group $U_{k}(F)$ in Corollary~\ref{cor:Ud} may be discrete, see \cite[Proposition~3.12]{Tor20}, and so it may happen that $U_{k}(F)^{+_k}$ is trivial.
\end{rem}

For use in the following corollary, a group $G \le \Aut(\T)$ is \textit{locally semi-primitive} if for every $v \in V\T$, the vertex stabiliser $G_v$ acts as a semi-primitive permutation group on the edges incident to $v$ in $\T$. A permutation group is \textit{semi-primitive} if it is transitive and all its normal subgroups are either transitive or semi-regular \cite{GM18}. (A permutation group is \emph{semi-regular} if the identity is the only element that fixes a point, which is equivalent to saying that the action of the group is \emph{free}. This is not to be confused with a building being semi-regular.)

\begin{cor}
Let $G \le \Aut(\T)$ be closed, non-discrete and locally semi-primitive. If $G$ does not fix any proper non-empty subtree or end of $\T$, and satisfies Property $P_k$, then $G$ has the closed range property.
\end{cor}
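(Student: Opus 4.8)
The plan is to reduce this corollary to Theorem~\ref{thm:mainthm} by showing that $G^{+_k} = G$ under the stated hypotheses. The key point is a result on locally semi-primitive actions: if $G \le \Aut(\T)$ is non-discrete and locally semi-primitive, and $G$ does not fix a proper non-empty subtree or an end of $\T$, then $G^{+} := \langle \Fix_G(e) : e \in E\T \rangle$ is a non-trivial closed cocompact normal subgroup of $G$, and in fact — using that the local action is semi-primitive so that $G$ is generated by its edge-fixators together with a finite part — one has $G^{+_k}$ of finite index in $G$. Concretely, I would first argue that $G^{+_k}$ is non-trivial: since $G$ is non-discrete, some vertex fixator acts non-trivially arbitrarily far out, and local semi-primitivity (transitivity of the local action, plus the normal-subgroup dichotomy) forces a non-trivial element fixing a ball $B(v,k)\cap B(w,k)$, so some $F_{k,e}$ is non-trivial.

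Next I would invoke Theorem~\ref{thm:mainthm} directly: $G$ is closed, does not stabilise a proper non-empty subtree or fix an end of $\T$, and satisfies Property $P_k$ by hypothesis, so $G^{+_k}$ has the closed range property. It therefore suffices to show that $G^{+_k}$ is cocompact in $G$ and then apply Proposition~\ref{prop:cocomclosed}. For cocompactness, the standard fact (going back to the Burger--Mozes analysis of locally primitive actions, and its extension to the semi-primitive case) is that $G^{+}$ acts transitively on each vertex orbit's sphere decomposition in a way that makes $G / G^{+}$ a quotient of a finite group, hence compact; more directly, $G^{+_k}$ contains the open compact subgroup $\mathcal{U}(\id, B(v,k)\cap B(w,k))$ and $G$ acts on $\T$ with finitely many orbits of vertices and edges, so $G^{+_k}$ is open and of finite covolume, in particular cocompact. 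Applying Proposition~\ref{prop:cocomclosed} with $H = G^{+_k}$ then yields that $G$ has the closed range property.

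The main obstacle I anticipate is establishing the cocompactness (equivalently, finite index) of $G^{+_k}$ in $G$ cleanly: one must use local semi-primitivity in an essential way, since for a general locally transitive action $G^{+}$ need not be cocompact. The cleanest route is probably to cite the relevant structure result — that for a closed, non-discrete, locally semi-primitive $G$ not fixing an end or proper subtree, $G^{+}$ (and hence $G^{+_k} \supseteq G^{+}$, or the appropriate $P_k$-variant) is a cocompact normal subgroup — from the literature on semi-primitive tree actions (e.g.\ the references \cite{GM18}, \cite{BEW15}), rather than reproving it. A secondary subtlety is confirming $G^{+_k}$ is non-trivial so that Theorem~\ref{thm:mainthm} gives a non-vacuous conclusion; this follows from non-discreteness together with the previous lemma's machinery. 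Once these two facts are in place, the corollary is immediate.
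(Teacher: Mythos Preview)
Your overall strategy matches the paper's proof exactly: apply Theorem~\ref{thm:mainthm} to get that $G^{+_k}$ has the closed range property, establish that $G^{+_k}$ is cocompact in $G$, and then invoke Proposition~\ref{prop:cocomclosed}. The paper does precisely this, citing \cite[Proposition~2.11(iii)]{Tor20} for the cocompactness step.

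There are, however, two confusions in your write-up worth correcting. First, your opening sentence says you will show $G^{+_k} = G$; this is generally false and you (rightly) never return to it --- drop it. Second, and more substantively, the containment you assert in the third paragraph is backwards: since $F_{k,e}$ fixes $B(v,k-1)\cup B(w,k-1)$, which contains the edge $e$, one has $F_{k,e} \le F_{1,e}$ and hence $G^{+_k} \le G^{+}$, not the reverse. So you cannot deduce cocompactness of $G^{+_k}$ from cocompactness of $G^{+}$ via inclusion. The correct argument, which is what the paper uses, is that the structure theory of locally semi-primitive actions gives that \emph{every} non-trivial closed normal subgroup of $G$ is cocompact; since $G^{+_k}$ is normal in $G$ (and non-trivial because $G$ is non-discrete), cocompactness follows directly. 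Your instinct to cite this from the literature is right --- the paper cites \cite{Tor20} --- you just need to apply it to $G^{+_k}$ itself rather than trying to pass through $G^{+}$. Your worry about non-triviality is well placed and is exactly what non-discreteness supplies.
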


\begin{proof}
By the theorem, $G^{+_k}$ has the closed range property, and by \cite[Proposition 2.11(iii)]{Tor20}, $G^{+_k}$ is cocompact in $G$ since it is a normal subgroup of $G$. An application of Proposition \ref{prop:cocomclosed} shows that $G$ also has the closed range property.
\end{proof}

We now provide an example of a class of totally disconnected locally compact groups acting on trees that do not have the closed range property.

\begin{exm}
For $d \in \mathbb{N}_{\ge2}$ and any $\F \le \FF \le \Sym(d)$, Le Boudec defines in \cite{LB16} a class of groups denoted $G(\F,\FF)$ that act on the $d$-regular tree and are a generalisation of the Universal groups mentioned above. It can be easily deduced from \cite[Corollary 3.6]{LB16} that whenever $\F \lneq \FF$ (equivalently $G(\F, \FF) \lneq U(\FF)$), the inclusion map $G(\F, \FF) \hookrightarrow \Aut(\Tq)$ is continuous but not closed.
\end{exm}


\section{Homomorphic Images of Automorphism Groups of Buildings}

In this section, we study Cartan-like decompositions of automorphism groups of semi-regular right-angled buildings and prove a closed range result for these groups.  We fix a locally-finite semi-regular right-angled building $\Delta$ of type $(W,S)$. We start out by proving that the group $\Aut(\Delta)^{+}$ of type preserving automorphisms of $\Delta$ admits a Cartan-like decomposition with the coset representatives chosen in one-to-one correspondence with the elements of $W$.

By Proposition 6.1 in \cite{Cap14}, the group $\Aut(\Delta)^{+}$ acts strongly transitively on $\Delta$ {\it i.e.\/} $\Aut(\Delta)^{+}$ acts transitively on pairs $(\mathcal{A}, C)$ where $\mathcal{A}$ is an apartment of $\Delta$ and $C$ is a chamber of $\mathcal{A}$. For a fixed chamber $C \in \Ch(\Delta)$, the group of automorphisms in $\Aut(\Delta)^{+}$ that fix $C$, denoted $\Aut(\Delta)^{+}_{C}$, is a compact open subgroup of $\Aut(\Delta)^{+}$ in the permutation topology. The following proposition gives an enumeration of the coset representatives for a Cartan-like decomposition of $\Aut(\Delta)^{+}$ over $\Aut(\Delta)^{+}_{C}$.

\begin{prop} \label{prop:buildcartan}
Let $\Delta$ be a locally-finite semi-regular right-angled building of type $(W,S)$. Fix $C \in \Ch(\Delta)$, an apartment $\mathcal{A}$ in $\Delta$ containing $C$ and let $K := \Aut(\Delta)^{+}_{C}$. For each $w \in W$, choose an automorphism $a_w \in \Aut(\Delta)^{+}$ stabilising $\mathcal{A}$ and sending $C$ to the chamber $C'$ of $\mathcal{A}$ at Weyl distance $w$ from $C$. Set $A :=\{ a_w \mid w \in W \}$. Then $\Aut(\Delta)^{+} = KAK$ is a Cartan-like decomposition.
\end{prop}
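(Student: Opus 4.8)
The plan is to show $\Aut(\Delta)^{+} = KAK$ by verifying two things: first, that every automorphism lies in some double coset $Ka_w K$, and second, that distinct $a_w$ give distinct (hence disjoint) double cosets, so that $A$ really is a set of coset representatives indexed bijectively by $W$. The main tool is strong transitivity of $\Aut(\Delta)^{+}$, cited from \cite[Proposition 6.1]{Cap14}, together with the basic fact that the Weyl distance is preserved by type-preserving automorphisms and that each apartment is isometric to the Cayley graph of $(W,S)$.

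\textbf{Surjectivity of $KAK$.} Let $g \in \Aut(\Delta)^{+}$. Set $w := \delta(C, g(C)) \in W$, and let $C'$ be the chamber of $\mathcal{A}$ with $\delta(C,C') = w$ (which exists and is unique since $\mathcal{A}$ is isometric to the Cayley graph of $(W,S)$). I would first use strong transitivity to find $k \in K$ with $k(g(C)) = C'$: indeed, both $(g(\mathcal{A}), g(C))$ — wait, more carefully — pick any apartment $\mathcal{A}'$ containing both $C$ and $g(C)$ (such an apartment exists by the building axioms, since any two chambers lie in a common apartment), then $(\mathcal{A}', g(C))$ and $(\mathcal{A}, C')$ are both apartment-chamber pairs; since $\delta(C, g(C)) = w = \delta(C, C')$ and $C$ lies in both apartments, strong transitivity applied to the pairs $(\mathcal{A}', C)$ and $(\mathcal{A}, C)$ produces $k \in \Aut(\Delta)^{+}$ fixing $C$ and carrying $\mathcal{A}'$ to $\mathcal{A}$; since type-preserving automorphisms preserve Weyl distance, $k$ carries $g(C)$ (the chamber of $\mathcal{A}'$ at Weyl distance $w$ from $C$) to the chamber of $\mathcal{A}$ at Weyl distance $w$ from $C$, which is $C'$. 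Thus $k \in K$ and $kg(C) = C' = a_w(C)$, so $a_w^{-1} k g$ fixes $C$, i.e. $a_w^{-1} k g \in K$, giving $g \in K a_w K \subseteq KAK$.

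\textbf{Disjointness.} Suppose $K a_u K = K a_w K$ for $u, w \in W$, so $a_u = k_1 a_w k_2$ for some $k_1, k_2 \in K$. Apply both sides to $C$: since $k_2 \in K$ fixes $C$, we get $a_u(C) = k_1 a_w(C)$. Now $a_u(C)$ is the chamber of $\mathcal{A}$ at Weyl distance $u$ from $C$, so $\delta(C, a_u(C)) = u$; on the other hand $\delta(C, k_1 a_w(C)) = \delta(k_1^{-1}C, a_w(C)) = \delta(C, a_w(C)) = w$, using that $k_1 \in K$ fixes $C$ and is type-preserving. Hence $u = w$, and so $a_u = a_w$ (as there is one chosen representative per element of $W$). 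This shows the double cosets $\{Ka_w K : w \in W\}$ are pairwise disjoint, so $A$ is a genuine set of coset representatives and the decomposition $\Aut(\Delta)^{+} = KAK$ in the sense of Definition~\ref{dfn:Cartan-like} holds.

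\textbf{Expected obstacle.} The routine part is the disjointness argument; the part requiring care is the surjectivity step, specifically extracting the element $k \in K$ from strong transitivity. The subtlety is that strong transitivity as stated gives transitivity on (apartment, chamber) pairs, and one must argue that the element moving $(\mathcal{A}', C)$ to $(\mathcal{A}, C)$ can be taken to fix $C$ — this follows because transitivity on such pairs, restricted to pairs sharing the fixed chamber $C$, is exactly transitivity on apartments through $C$ by elements of $K$; and then one must check this element does the right thing to $g(C)$, which is where preservation of Weyl distance by type-preserving automorphisms enters. I would make sure the building axiom guaranteeing a common apartment for any two chambers (equivalently, for the pair $C, g(C)$) is invoked explicitly, as it underlies the whole construction.
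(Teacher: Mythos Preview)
Your proof is correct and follows essentially the same approach as the paper's: both use strong transitivity (applied to the pairs $(\mathcal{A}',C)$ and $(\mathcal{A},C)$) to produce $k\in K$ with $kg(C)=a_w(C)$, and both establish disjointness by computing $\delta(C,\cdot)$ on $a_u=k_1a_wk_2$. If anything, your write-up is slightly more explicit about why $k$ carries $g(C)$ to $C'$ and about invoking the existence of a common apartment.
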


\begin{proof}
Let $g \in \Aut(\Delta)^{+}$ and suppose that $\delta(C, g(C)) = w$. Choose an apartment $\mathcal{A}'$ containing $C$ and $g(C)$. Then, by strong transitivity of $\Aut(\Delta)^{+}$, there exists an automorphism $k \in K$ mapping the pair $(\mathcal{A}',C)$ to the pair $(\mathcal{A},C)$. Since $k$ is type-preserving, $\delta(C, kg(C)) = w$ and hence it follows that $a_{w}^{-1}kg(C) = C$. Thus there exists $k' \in K$ with $a_{w}^{-1}kg = k'$ {\it i.e.\/} $g = k^{-1}a_{w}k' \in KAK$.

We now show that the double cosets $KaK$ for distinct $a \in A$ are disjoint {\it i.e.\/} that $A$ is a set of coset representatives for the decomposition. Suppose that there exists $a_u, a_w \in A$ such that $Ka_u K = Ka_w K$. Then there exists $k_1, k_2 \in K$ such that $a_u = k_1 a_w k_2$. By definition of $a_u$ and $a_w$, and since $k_1$ and $k_2$ are type preserving and fix $C$, we have $w=\delta(C, a_wC) = \delta(C, k_1 a_w k_2 C) = \delta(C,a_uC) = u$. Thus $a_u = a_w$ by construction. This completes the proof.
\end{proof}

\begin{rem}
Any group with a Weyl transitive action on a building, in particular any group with a BN-pair, admits a decomposition of this form with the coset representatives indexed by the elements of the Weyl group $W$ (see \cite[Chapter 6]{AB08}). This decomposition is usually called a \textit{Bruhat} decomposition in the theory of buildings. Note that $K$ is not necessarily compact when the building is not locally finite.
\end{rem}

We now prove that, under certain assumptions, the type-preserving automorphism groups of locally-finite semi-regular right-angled buildings along with the corresponding universal groups have the closed range property. We will more or less replicate the proof of \cite[Theorem 28]{CR09}.

First we introduce some notation. We follow the terminology and definitions used in \cite{Wei04,DMSS18, DMS19}. Given a chamber $C \in \Ch(\Delta)$ and $s \in S$, the \textit{$s$-wing} of $C$, denoted $X_s(C)$, is defined as $X_s(C):=\{ D \in \Ch(\Delta) \mid \proj_{\mathcal{P}_{s}(C)}(D) = C \}$. 

Fix an apartment $\mathcal{A}$ of $\Delta$. A \textit{reflection} of $\mathcal{A}$ is an element $r \in W$ that fixes an edge in $\mathcal{A}$. The set of edges fixed by $r$ is called the \textit{wall} of $r$. If $\{D_1,D_2\}$ is an edge in the wall of $r$, then the chambers of $\mathcal{A}$ split into two disjoint sets $\alpha := \{ C \in \Ch(\mathcal{A}) \mid d(C,D_1) < d(C,D_2) \}$ and its complement $-\alpha := \{ C \in \Ch(\mathcal{A}) \mid d(C,D_1) > d(C,D_2) \}$ called the \textit{roots} associated to $r$ \cite[Proposition 3.11]{Wei04}. It can be shown that these sets are independent of the choice of the edge $\{D_1,D_2\}$ in the wall of $r$ \cite[Corollary 3.15]{Wei04}. Given any root $\alpha$, the root $-\alpha$ is called the root \textit{opposite} $\alpha$. A root in the building $\Delta$ is defined to be a root in one of its apartments. The set of roots of an apartment $\mathcal{A}$ in $\Delta$ will be denoted by $\Phi(\mathcal{A})$. 

Given any root $\alpha$ in $\Delta$, the wall of $\alpha$ consists of all panels with chambers in both $\alpha$ and $-\alpha$ \cite[Proposition 3.12]{Wei04}. Since $\Delta$ is right-angled, all panels in the wall of $\Delta$ are of the same type. Thus we may define the type of a root $\alpha$ to be the type of the panels in its wall. The $s$-wings in $\Delta$ are precisely the roots of type $s$. The \textit{root wing group} associated to a root $\alpha$ of type $s$ is defined as:
\begin{displaymath} U_{\alpha} = \Fix_{\Aut(\Delta)^{+}}(X_s(C)) \end{displaymath}

\noindent for some chamber $C$ in the wall of $\alpha$. The definition does not depend on the choice of $C$ (see \cite[Section 3 \& Section 4]{DMS19} for more details). The following result will be useful in the proof of the following theorem.

\begin{lem}\label{lem:lem3}
\cite[Lemma 4.10]{DMS19} Fix a chamber $C \in \Ch(\Delta)$, an apartment $\mathcal{A}$ containing $C$ and $n \in \mathbb{N}$. For every $\alpha \in \Phi(\mathcal{A})$ with $d(C, \alpha) > n$, the group $U_{- \alpha}$ is contained in $\Fix_{\Aut(\Delta)^{+}}(B(C,n))$.
\end{lem}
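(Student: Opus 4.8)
The statement we need to prove is Lemma~\ref{lem:lem3}-style: for a locally-finite semi-regular right-angled building $\Delta$, a fixed chamber $C$, an apartment $\mathcal{A} \ni C$, and $n \in \mathbb{N}$, every root $\alpha \in \Phi(\mathcal{A})$ with $d(C,\alpha) > n$ satisfies $U_{-\alpha} \subseteq \Fix_{\Aut(\Delta)^{+}}(B(C,n))$. Since this is cited verbatim as \cite[Lemma 4.10]{DMS19}, the plan is essentially to reproduce the geometric argument. The plan is to unwind the definition of $U_{-\alpha}$ and show that any automorphism fixing the relevant wing $X_s(D)$ (for $D$ a chamber in the wall of $-\alpha$, i.e.\ the wall of $\alpha$) must fix every chamber within gallery-distance $n$ of $C$.

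First I would set up notation: let $s$ be the type of $\alpha$, pick the panel $\mathcal{P}_s(D)$ in the wall of $\alpha$ that is ``closest'' to $C$, say with $d(C, \mathcal{P}_s(D)) = d(C,\alpha) =: m > n$, and let $D' = \proj_{\mathcal{P}_s(D)}(C)$, so that $D'$ lies on the $-\alpha$ side (the side of $C$) and $D$ is the chamber of $\mathcal{P}_s(D)$ in $\alpha$. By definition $U_{-\alpha} = \Fix_{\Aut(\Delta)^+}(X_s(D'))$, the $s$-wing of the chamber on the $C$-side. The crux is then to show the containment $B(C,n) \subseteq X_s(D')$, i.e.\ that every chamber $E$ with $d(C,E) \le n$ satisfies $\proj_{\mathcal{P}_s(D')}(E) = D'$. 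Granting this, any $g \in U_{-\alpha}$ fixes $X_s(D') \supseteq B(C,n)$ pointwise, which is exactly the claim.

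To prove $B(C,n) \subseteq X_s(D')$, I would argue by the combinatorics of projections and galleries. For $E \in B(C,n)$, one has $\proj_{\mathcal{P}_s(D')}(E)$ is the chamber of $\mathcal{P}_s(D')$ minimising distance to $E$; I want this to be $D'$ rather than $D$. Using the triangle-type inequality for projections onto panels (gallery distance from $E$ to $D$ versus $D'$ differs by $1$ and is governed by whether a minimal gallery from $E$ crosses the wall of $\alpha$), it suffices to show no minimal gallery from $C$ to $E$, extended minimally toward the panel, needs to cross the wall of $\alpha$ — which follows because $d(C,\alpha) = m > n \ge d(C,E)$, so $E$ is still strictly on the $-\alpha$ side and closer to $D'$ than to $D$. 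Concretely: $d(E, D') \le d(E,C) + d(C,D') $ and, since crossing the wall increases distance, $d(E,D) = d(E,D')+1$ provided $E$ and $C$ lie in the same root $-\alpha$; the hypothesis $d(C,\alpha)>n$ guarantees the ball $B(C,n)$ meets no panel of the wall of $\alpha$, hence lies entirely in $-\alpha$ (as a set of chambers, via the apartment-projection / convexity of roots). Then $\proj_{\mathcal{P}_s(D')}(E) = D'$ as required. Finally, combine with the definition of $U_{-\alpha}$ and the fact (noted just before the lemma, and in \cite[Section 3 \& 4]{DMS19}) that this definition is independent of the chosen chamber in the wall.

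The main obstacle is the projection bookkeeping: carefully justifying that $B(C,n)$ lies entirely on the $C$-side $-\alpha$ of the wall and that projecting such chambers onto the wall-panel lands on the $-\alpha$-chamber $D'$. This uses convexity of roots in a right-angled building and the gate property of projections onto residues (\cite[Proposition 5.34]{AB08}); once that is in place, the containment $B(C,n)\subseteq X_s(D')$ and hence the lemma is immediate. I would, however, simply cite \cite[Lemma 4.10]{DMS19} for the full details rather than reproduce the calculation, since the statement is used as a black box in what follows.
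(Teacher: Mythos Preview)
The paper gives no proof of this lemma; it is stated with a bare citation to \cite[Lemma 4.10]{DMS19} and used as a black box thereafter. Your proposal does exactly the same in the end (cite the reference), so the approaches coincide; the geometric sketch you add beforehand is extra and broadly on the right track.
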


We also need the following lemma, which applies to any irreducible non-spherical Coxeter system.

\begin{lem}\label{lem:lem4}
Let $(W,S)$ be an irreducible non-spherical Coxeter system with $S$ finite, and let $\Delta$ be the corresponding thin building. Let $C$ be a chamber in $\Delta$. Suppose that $\mathcal{W}$ is an infinite subset of $W$. Then there is a root $\alpha \in \Phi(\Delta)$, associated to an $s\in S$, and a sequence $(w_k)_{k=1}^{\infty} \subseteq \mathcal{W}$ such that $d(C, w_k(\alpha)) \rightarrow \infty$. 
\end{lem}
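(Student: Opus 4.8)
Identify $\Delta$ with the Coxeter complex of $(W,S)$: $\Ch(\Delta)=W$, with $W$ acting by left translation, $C$ corresponding to $1$, and $d(u,v)=\ell(u^{-1}v)$; since $|S|<\infty$ the chamber graph is locally finite, so balls are finite. For a root $\alpha$ one has $d(C,w\alpha)=d(w^{-1}C,\alpha)$, where $d(x,\beta):=\min\{d(x,y):y\in\beta\}$, so it is enough to produce \emph{one} root $\alpha$ with $\sup_{w\in\mathcal W}d(w^{-1}C,\alpha)=\infty$: one then picks $w_k\in\mathcal W$ with $d(C,w_k\alpha)\ge k$, and since every root is a $W$-translate of a fundamental root $\alpha_s$ ($s\in S$), the chosen $\alpha$ is ``associated to'' such an $s$.

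The plan is to argue by contradiction. Suppose $\sup_{w\in\mathcal W}d(w^{-1}C,\alpha)<\infty$ for every root $\alpha$, and set $\mathcal V:=\{w^{-1}C:w\in\mathcal W\}$, which is infinite because chamber stabilisers in $W$ are trivial. Applying the supposition to $\alpha_s$ and to $-\alpha_s$ shows that each $v\in\mathcal V$ lies within a bounded distance of $\alpha_s$ and of $-\alpha_s$; joining nearby chambers of $\alpha_s$ and of $-\alpha_s$ by a minimal gallery, which must cross the wall $M_s$ of the $s$-panel of $C$, I would conclude that $\mathcal V$ lies in an $R_s$-neighbourhood of $M_s$ for some finite $R_s$, for each $s\in S$; set $R:=\max_{s\in S}R_s$. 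Now pass to the Davis complex $\Sigma$ of $(W,S)$, a proper $\mathrm{CAT}(0)$ space on which $W$ acts properly cocompactly (here $|S|<\infty$ is used), whose walls correspond to the walls of $\Delta$, each being the fixed-point set of the associated reflection, and whose metric is $W$-equivariantly quasi-isometric to the chamber metric. Since $\mathcal V$ is infinite and the chamber graph is locally finite, $\mathcal V$ is unbounded, so by properness a subsequence $(v_k)$ converges to some $\xi\in\partial_\infty\Sigma$. For each $s$, choosing $v_k'\in M_s$ with $d(v_k,v_k')\le R$ gives $v_k'\to\xi$, and since $s$ fixes $M_s$ pointwise, $s\xi=\lim sv_k'=\lim v_k'=\xi$. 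Thus $W$ fixes $\xi$.

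This is impossible. Since $S$ consists of involutions, the Busemann cocycle $W\to\mathbb R$ at $\xi$ is a homomorphism out of a torsion-generated group, hence trivial, so the Busemann function $b_\xi$ is $W$-invariant; but the $W$-orbit of a point is coarsely dense in $\Sigma$ by cocompactness, and $b_\xi$ is $1$-Lipschitz, so $b_\xi$ is bounded on $\Sigma$ --- contradicting that $b_\xi$ tends to $-\infty$ along the geodesic ray towards $\xi$ (which has infinite length, as $\Sigma$ is unbounded because $W$ is non-spherical). I expect this last step --- that $W$ fixes no point of $\partial_\infty\Sigma$ --- to be the real obstacle, and the place where one must argue carefully (or invoke it as a known property of Coxeter groups acting on their Davis complexes); the remaining ingredients are routine gallery combinatorics together with standard facts about closed convex subsets of $\mathrm{CAT}(0)$ spaces.
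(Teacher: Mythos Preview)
Your approach is correct but genuinely different from the paper's. You argue geometrically: assuming every fundamental root $\alpha_s,-\alpha_s$ has bounded distance from $\mathcal V=\{w^{-1}C:w\in\mathcal W\}$, you conclude $\mathcal V$ lies in a tube around each wall $M_s$, pass to the Davis complex, extract a boundary limit $\xi$, and show each $s$ fixes $\xi$; then the Busemann homomorphism $W\to\mathbb R$ vanishes (torsion generators), forcing $b_\xi$ to be $W$-invariant and hence bounded by cocompactness, a contradiction. All of these steps are sound; the only place to tighten the write-up is to note that your contradiction hypothesis is used \emph{only} for the $2|S|$ fundamental roots, so the conclusion is in fact that some $\alpha_s$ or $-\alpha_s$ itself works---you do not need the remark about arbitrary roots being $W$-translates of fundamental ones.

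The paper takes an entirely algebraic route. It computes directly that the distance from $wC$ to the wall $\omega_s$ through $\{C,sC\}$ equals $\tfrac12\bigl(\ell(w^{-1}sw)-1\bigr)$, and then shows that $\{w\in W:\ell(w^{-1}sw)\le R\text{ for all }s\in S\}$ is finite: otherwise pigeonhole gives $w_1\ne w_2$ with $w_1^{-1}sw_1=w_2^{-1}sw_2$ for every $s$, so $w_1w_2^{-1}$ is central, contradicting that irreducible non-spherical Coxeter groups have trivial centre. This avoids the Davis complex, $\mathrm{CAT}(0)$ boundaries and Busemann functions entirely; the only external input is the centre-triviality fact. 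Your argument trades that single algebraic fact for a package of $\mathrm{CAT}(0)$ technology, and is in spirit closer to the original geometric proof of Caprace--R\'emy that the paper explicitly set out to replace by an algebraic one. Both proofs ultimately exploit the same phenomenon---$W$ has no ``global symmetry'' (no centre, equivalently no fixed point at infinity)---but encode it differently.
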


\begin{proof}
This is essentially \cite[Lemma 27]{CR09} which is given a geometric proof there. To make this article more self-contained, we give an alternative proof that is algebraic. 

It may be supposed without loss of generality that $C$ is the fundamental chamber corresponding to the identity in $W$. Then $\{C,sC\}$ is a panel fixed by $s\in S$. Denote the wall containing that panel by $\omega_s$ and the corresponding roots by $\alpha_s$ and $-\alpha_s$. There is $D\in\omega_s$ such that $d(wC,D)$ is equal to the distance from $wC$ to $\omega_s$, which is equal to $d(swC,sD)$ and to the distance from $swC$ to $\omega_s$. Hence $2d(wC,D) +1= d(wC,swC) = \ell(w^{-1}sw)$, see \cite[Equation 3.7]{AB08}. 
\begin{claim*} Let $R>0$. Then the cardinality of $\{w\in W\mid \ell(w^{-1}sw)\leq R \text{ for all }s\in S\}$ is finite.
\end{claim*}
\begin{proof} Suppose otherwise. Then, since $W$ is infinite and $S$ finite, there are $w_1\ne w_2$ in $W$ such that 
$$
w_1^{-1}sw_1 = w_2^{-1}sw_2 \text{ for all }s\in S.
$$
Hence $w_1w_2^{-1}\ne\id$ and belongs to the centre of $W$, which contradicts that the centre of $W$ is trivial \cite[Proposition 2.37]{AB08}.
\end{proof}

Let $\mathcal{W}$ be the infinite subset of $W$ in the hypothesis. Then the claim and the paragraph that precedes it, and again using that $S$ is finite, imply that there is $s\in S$ and a sequence $(w_k)_{k=1}^{\infty}$ in $\mathcal{W}$ such that the distance from $w_k^{-1}$ to $\omega_s$ diverges to infinity as $k\to\infty$. Since $\alpha_s$ and $-\alpha_s$ partition $W\setminus \omega_s$ it may be supposed that $w_k^{-1}\in-\alpha_s$ for all $k$. Then $d(w_k^{-1}C,\alpha)\to\infty$ and $k\to\infty$ and the proof is complete because $d(w_k^{-1}C,\alpha)=d(C,w_k(\alpha))$.
\end{proof}

Note for use in the following proof, when $\Delta$ is a semi-regular right-angled building of type $(W,S)$, and $(W,S)$ is irreducible and non-spherical, then $\Aut(\Delta)^{+}$ is simple \cite[Theorem 1.1]{Cap14}.

\begin{thm}\label{thm:mainthm2}
Let $\Delta$ be a locally-finite semi-regular right-angled building of type $(W,S)$. Assume that $(W,S)$ is irreducible and non-spherical. Then the group $\Aut(\Delta)^{+}$ has the closed range property.
\end{thm}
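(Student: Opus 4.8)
The plan is to apply Theorem~\ref{thm:closedrangethm}: since $\Aut(\Delta)^{+}$ is simple when $(W,S)$ is irreducible and non-spherical \cite[Theorem 1.1]{Cap14}, it suffices to exhibit a compact open subgroup $K$ and a Cartan-like decomposition $\Aut(\Delta)^{+}=KAK$ with the contraction group property. The natural choice is the decomposition from Proposition~\ref{prop:buildcartan}: fix a chamber $C$, an apartment $\mathcal{A}$ through $C$, set $K:=\Aut(\Delta)^{+}_{C}$, and take $A=\{a_w\mid w\in W\}$, where $a_w$ stabilises $\mathcal{A}$ and moves $C$ to the chamber of $\mathcal{A}$ at Weyl distance $w$. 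Since $W$ is infinite, a set of coset representatives is ``bounded'' precisely when the corresponding Weyl-group elements stay in a ball around the identity of $W$; an unbounded sequence $(a_{w_k})_{k=1}^{\infty}$ of coset representatives corresponds to an infinite subset $\mathcal{W}=\{w_k\}\subseteq W$. So I must show that such a sequence has a subsequence with non-trivial contraction group.

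The key geometric input is Lemma~\ref{lem:lem4}: applied to the infinite set $\mathcal{W}\subseteq W$ and the chamber $C$ (viewed in the apartment/thin building $\mathcal{A}$), it produces an $s\in S$, a root $\alpha\in\Phi(\mathcal{A})$ of type $s$, and a subsequence, which I relabel $(w_k)$, with $d(C,w_k(\alpha))\to\infty$. Now I bring in the root wing groups: by right-angledness, $U_{-\alpha}$ is non-trivial (it contains non-trivial elements since panels are of cardinality $\geq 2$ in the thick case; one should note $\Delta$ thick is needed — if $q_s=1$ for some $s$ the statement should still be fine because irreducibility forces enough thickness, but I would check this or simply assume $\Delta$ thick, which is implicit in ``locally-finite semi-regular'' with the interesting cases). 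The element $a_{w_k}$, being an automorphism stabilising $\mathcal{A}$ and realising the Weyl-group element $w_k$ on it, satisfies $a_{w_k}(\alpha)=w_k(\alpha)$ as roots of $\mathcal{A}$ (up to identifying the action on $\mathcal{A}$ with the $W$-action), hence $a_{w_k}U_{-\alpha}a_{w_k}^{-1}=U_{-(w_k\alpha)}=U_{w_k(-\alpha)}$. By Lemma~\ref{lem:lem3}, for each $n$ and all large $k$ we have $d(C,w_k(\alpha))>n$, so $U_{w_k(-\alpha)}\subseteq\Fix_{\Aut(\Delta)^{+}}(B(C,n))$. Therefore, for any fixed non-trivial $x\in U_{-\alpha}$, the elements $a_{w_k}xa_{w_k}^{-1}$ lie in $\Fix_{\Aut(\Delta)^{+}}(B(C,n))$ for all large $k$, and since the $\Fix(B(C,n))$ form a neighbourhood basis of the identity in the permutation topology, $a_{w_k}xa_{w_k}^{-1}\to\id$. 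Hence $x$ is a non-trivial element of $\con((a_{w_k})_{k=1}^{\infty})$, and the decomposition has the contraction group property.

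The main obstacle I anticipate is the bookkeeping in matching the abstract $W$-action used in Lemma~\ref{lem:lem4} with the concrete action of the automorphisms $a_w$ on the apartment $\mathcal{A}$, and in particular getting the identification $a_{w_k}U_{-\alpha}a_{w_k}^{-1}=U_{w_k(-\alpha)}$ precisely right — this requires that $a_{w_k}$ restricted to $\mathcal{A}$ induces the isometry corresponding to left translation by $w_k$ (or its inverse; the exact convention in Lemma~\ref{lem:lem4} has $w_k^{-1}$ appearing), so some care is needed to line up the ``$d(C,w_k(\alpha))=d(w_k^{-1}C,\alpha)$'' identity from the proof of Lemma~\ref{lem:lem4} with the direction in which $a_{w_k}$ transports roots. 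A secondary point is ensuring the root wing group $U_{-\alpha}$ is genuinely non-trivial; this follows from strong transitivity of $\Aut(\Delta)^{+}$ (Proposition 6.1 in \cite{Cap14}) together with thickness, and I would cite \cite[Section 3 \& 4]{DMS19} for the basic properties of $U_\alpha$. Once these identifications are nailed down, the argument is a direct assembly of Proposition~\ref{prop:buildcartan}, Lemmas~\ref{lem:lem3} and~\ref{lem:lem4}, simplicity of $\Aut(\Delta)^{+}$, and Theorem~\ref{thm:closedrangethm}.
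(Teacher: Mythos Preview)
Your proposal is correct and follows essentially the same route as the paper: fix the Cartan-like decomposition of Proposition~\ref{prop:buildcartan}, invoke simplicity of $\Aut(\Delta)^{+}$ via \cite[Theorem 1.1]{Cap14}, and reduce via Theorem~\ref{thm:closedrangethm} to checking the contraction group property, which you establish by combining Lemma~\ref{lem:lem4} (to find a root $\alpha$ with $d(C,a_{w_k}(\alpha))\to\infty$ along a subsequence) with Lemma~\ref{lem:lem3} (to force $a_{w_k}U_{-\alpha}a_{w_k}^{-1}\to\id$). The two subtleties you flag --- identifying the $W$-action on $\mathcal{A}$ with the action of the $a_w$, and non-triviality of $U_{-\alpha}$ --- are handled in the paper only implicitly (the first by the phrase ``because $\mathcal{A}$ is convex'', the second tacitly), so your more explicit treatment of them is, if anything, an improvement.
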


\begin{proof}
Let $\mathcal{A}$ be an apartment in $\Delta$ and decompose $\Aut(\Delta)^{+}$ as $\Aut(\Delta)^{+} = KAK$ as in Proposition \ref{prop:buildcartan}. By simplicity of $\Aut(\Delta)^{+}$ and Theorem \ref{thm:closedrangethm}, we just need to show that this decomposition satisfies the contraction group property. 

Let $(a_{w_i})_{i=1}^{\infty}$ be a sequence in $A$. If the sequence is bounded we are done. So suppose that the sequence is unbounded, in which case $\{w_i\}_{i=1}^\infty$ is infinite. By Lemma \ref{lem:lem4}, and because $\mathcal{A}$ is convex, there exists a root $\alpha \in \Phi(\mathcal{A})$ and a subsequence $(a_{w_{i_k}})_{k=1}^{\infty} \subseteq (a_{w_i})_{i=1}^{\infty}$ such that $d(C, a_{w_{i_k}} (\alpha)) \rightarrow \infty$. Choose a non-trivial element $x \in U_{-\alpha}$. Then $a_{w_{i_k}}xa_{w_{i_k}}^{-1} \in U_{-a_{w_{i_k}}(\alpha)}$ for all $k$. By Lemma \ref{lem:lem3}, we must have that $a_{w_{i_k}}xa_{w_{i_k}}^{-1} \rightarrow \id$ and hence $x$ is a non-trivial element in the contraction group of the $a_{w_{i_k}}$. This completes the proof.
\end{proof}

Adopting the notation from the paper \cite{DMSS18} for universal groups of semi-regular right-angled buildings, and using the simplicity result \cite[Theorem 3.25]{DMSS18}, an almost identical proof gives the following.

\begin{cor}
Let $\Delta$ be a thick locally-finite semi-regular right-angled building of irreducible type $(W,S)$ and rank at least 2. For each $s \in S$, let $h_s: \Ch(\Delta) \rightarrow Y_s$ be a legal $s$-colouring and $G^s \le \Sym(Y_s)$ be transitive and generated by point stabilisers. Then the universal group $U((G^s)_{s \in S})$ satisfies the closed range property. 
\end{cor}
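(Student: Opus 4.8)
The plan is to mimic the proof of Theorem \ref{thm:mainthm2} almost verbatim, replacing the full type-preserving automorphism group $\Aut(\Delta)^{+}$ by the universal group $U((G^s)_{s \in S})$ and replacing the simplicity input \cite[Theorem 1.1]{Cap14} by the simplicity input \cite[Theorem 3.25]{DMSS18}. First I would check that $U((G^s)_{s \in S})$ still admits a Cartan-like decomposition $U((G^s)_{s\in S}) = KAK$ of the form given in Proposition \ref{prop:buildcartan}: for this one needs that $U((G^s)_{s \in S})$ acts strongly transitively on $\Delta$, which holds by the results of \cite{DMSS18} (the universal group acts transitively on chambers and, fixing a chamber, its stabiliser acts transitively on apartments through that chamber), so the same orbit-counting argument produces coset representatives $a_w$ indexed by $w \in W$, now chosen inside $U((G^s)_{s \in S})$ and stabilising a fixed apartment $\mathcal{A}$. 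Here $K := U((G^s)_{s \in S})_C$ is compact and open since $\Delta$ is locally finite.

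Next, by \cite[Theorem 3.25]{DMSS18}, under the stated hypotheses (thick, irreducible, rank at least $2$, each $G^s$ transitive and generated by point stabilisers) the group $U((G^s)_{s\in S})$ is simple, hence topologically simple, so by Theorem \ref{thm:closedrangethm} it suffices to verify that the decomposition $U((G^s)_{s \in S}) = KAK$ has the contraction group property. Given an unbounded sequence $(a_{w_i})_{i=1}^{\infty} \subseteq A$, the set $\{w_i\}$ is infinite, so Lemma \ref{lem:lem4} (applied to the irreducible non-spherical Coxeter system $(W,S)$ underlying the fixed apartment, using convexity of $\mathcal{A}$) supplies a root $\alpha \in \Phi(\mathcal{A})$ of some type $s \in S$ and a subsequence with $d(C, a_{w_{i_k}}(\alpha)) \to \infty$. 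I would then pick a non-trivial element $x$ in the root wing group $U_{-\alpha}$ intersected with $U((G^s)_{s\in S})$ — this is where one must be a little careful, since $U_{-\alpha}$ was defined as a fixator inside $\Aut(\Delta)^{+}$ and one needs $U_{-\alpha} \cap U((G^s)_{s\in S})$ to be non-trivial. This follows because the root wing groups lie in every universal group with transitive local colour groups (indeed $U((G^s)_{s\in S})$ contains all wing fixators; see \cite{DMSS18}), so $U_{-\alpha} \cap U((G^s)_{s\in S})$ is non-trivial. Then $a_{w_{i_k}} x a_{w_{i_k}}^{-1} \in U_{-a_{w_{i_k}}(\alpha)}$, and Lemma \ref{lem:lem3} forces $a_{w_{i_k}} x a_{w_{i_k}}^{-1} \to \id$, so $x$ is a non-trivial element of the contraction group of the subsequence, establishing the contraction group property.

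Finally, assembling the pieces: the Cartan-like decomposition of $U((G^s)_{s\in S})$ has the contraction group property, $U((G^s)_{s\in S})$ is topologically simple, and Theorem \ref{thm:closedrangethm} then gives the closed range property. The main obstacle, and the only place where the argument is not a literal copy of Theorem \ref{thm:mainthm2}, is confirming that the objects borrowed from the $\Aut(\Delta)^{+}$ setting — strong transitivity, the root wing groups $U_{-\alpha}$, and the simplicity statement — all remain valid (and non-trivial) when one restricts from $\Aut(\Delta)^{+}$ to $U((G^s)_{s\in S})$; granting the results of \cite{DMSS18}, each of these is available, which is why the author can say ``an almost identical proof gives the following."
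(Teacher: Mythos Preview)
Your proposal is correct and follows exactly the route the paper intends: replace $\Aut(\Delta)^{+}$ by $U((G^s)_{s\in S})$, invoke strong transitivity from \cite{DMSS18} to get the Cartan-like decomposition of Proposition~\ref{prop:buildcartan}, swap the simplicity input to \cite[Theorem~3.25]{DMSS18}, and rerun the contraction-group argument via Lemmas~\ref{lem:lem3} and~\ref{lem:lem4}. One small imprecision: it is not true that the full wing fixator $U_{-\alpha}\le\Aut(\Delta)^{+}$ is contained in $U((G^s)_{s\in S})$ for arbitrary transitive $G^s$; what you actually need (and what \cite{DMSS18} gives) is that the wing fixator \emph{computed inside} $U((G^s)_{s\in S})$ is non-trivial, which follows because each $G^s$ has non-trivial point stabilisers under the hypotheses---so take $x$ there rather than in $U_{-\alpha}\cap U((G^s)_{s\in S})$.
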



\bibliographystyle{amsalpha}
\bibliography{Cartan_decomp_bib}


\end{document}